\providecommand{\U}[1]{\protect\rule{.1in}{.1in}}
\theoremstyle{plain}
\newtheorem{theorem}{Theorem}
\newtheorem{lemma}[theorem]{Lemma}
\newtheorem{proposition}[theorem]{Proposition}
\newtheorem{problem}{Problem}
\theoremstyle{definition}
\theoremstyle{remark}
\newtheorem{remark}[theorem]{Remark}
\begin{document}
\title{Pseudo-polynomial Functions over Finite Distributive Lattices}
\author{Miguel Couceiro}
\address[Miguel Couceiro]{Mathematics Research Unit, FSTC, University of Luxembourg \\
6, rue Coudenhove-Kalergi, L-1359 Luxembourg, Luxembourg}
\email{miguel.couceiro[at]uni.lu }
\author{Tam\'as Waldhauser}
\address[Tam\'as Waldhauser]{Mathematics Research Unit, FSTC, University of Luxembourg \\
6, rue Coudenhove-Kalergi, L-1359 Luxembourg, Luxembourg, and Bolyai
Institute, University of Szeged, Aradi v\'{e}rtan\'{u}k tere 1, H-6720 Szeged, Hungary}
\email{twaldha@math.u-szeged.hu}
\maketitle

\begin{abstract}
In this paper we extend the authors' previous works \cite{CW1,CW2} by
considering an aggregation model $f\colon X_{1}\times\cdots\times
X_{n}\rightarrow Y$ for arbitrary sets $X_{1},\ldots,X_{n}$ and a finite
distributive lattice $Y$, factorizable as
\[
~f(x_{1},\ldots,x_{n})=p(\varphi_{1}(x_{1}),\ldots,\varphi_{n}(x_{n})),
\]
where $p$ is an $n$-variable lattice polynomial function over $Y$, and each
$\varphi_{k}$ is a map from $X_{k}$ to $Y$. Following the terminology of
\cite{CW1,CW2}, these are referred to as pseudo-polynomial functions.

We present an axiomatization for this class of pseudo-polynomial functions
which differs from the previous ones both in flavour and nature, and develop
general tools which are then used to obtain all possible such factorizations
of a given pseudo-polynomial function.

\end{abstract}

\section{Introduction and Motivation\label{sect intro}}

The Sugeno integral (introduced by Sugeno~\cite{Sug74,Sug77}) remains as one
of the most noteworthy aggregation functions, and this is partially due to the
fact that it provides a meaningful way to fuse or merge values within
universes where essentially no structure, other than an order, is assumed.
Even though primarily defined over real intervals, the concept of Sugeno
integral can be extended to wider domains, namely, distributive lattices, via
the notion of lattice polynomial function (i.e., a combination of variables
and constants using the lattice operations $\wedge$ and $\vee$). As it turned
out, idempotent lattice polynomial functions coincide with (discrete) Sugeno
integrals (see e.g. \cite{CouMar1,Mar09}).

Recently, the Sugeno integral has been generalized via the notion of
quasi-polynomial function (see \cite{CouMar3}) originally defined as a mapping
$f\colon X^{n}\rightarrow X$~ on a bounded chain $X$ and which can be
factorized as
\begin{equation}
~f(x_{1},\ldots,x_{n})=p(\varphi(x_{1}),\ldots,\varphi(x_{n})),
\label{eq:quasi1}%
\end{equation}
where $p\colon X^{n}\rightarrow X$ is a polynomial function and $\varphi\colon
X\rightarrow X$ is an order-preserving map. This notion was later extended in
two ways.

In \cite{CouMar5}, the input and output universes were allowed to be
arbitrary, possibly different, bounded distributive lattices $X$ and $Y$ so
that~ $f\colon X^{n}\rightarrow Y$ is factorizable as in (\ref{eq:quasi1}),
where now $p\colon Y^{n}\rightarrow Y$ and $\varphi\colon X\rightarrow Y$.
These functions appear naturally within the scope of decision making under
uncertainty since they subsume overall preference functionals associated with
Sugeno integrals whose variables are transformed by the utility function
$\varphi$. Several axiomatizations for this function class were proposed, as
well as all possible factorizations described.

In \cite{CW1} and \cite{CW2} a different extension was considered, now
appearing within the realm of multicriteria decision making. Essentially, the
aggregation model was based on functions $f\colon X_{1}\times\cdots\times
X_{n}\rightarrow Y$ for bounded chains $X_{1},\ldots,X_{n}$ and $Y$, which can
be factorized as compositions
\begin{equation}
~f(x_{1},\ldots,x_{n})=p(\varphi_{1}(x_{1}),\ldots,\varphi_{n}(x_{n})),
\label{eq:pseudo1}%
\end{equation}
where $p\colon Y^{n}\rightarrow Y$ is a Sugeno integral, and each $\varphi
_{k}\colon X_{k}\rightarrow Y$ is an order-preserving map. Such functions were
referred to as Sugeno utility functions in \cite{CW1}. Pseudo-polynomial
functions were defined as functions of the form (\ref{eq:pseudo1}), where $p$
is an arbitrary (possibly non-idempotent) lattice polynomial function, and
each $\varphi_{k}$ satisfies a certain boundary condition (which is weaker
than order-preservation). Note that every quasi-polynomial function
(\ref{eq:quasi1}) can be regarded as a pseudo-polynomial function, where
$X_{1}=\cdots= X_{n}=X$ and $\varphi_{1}=\cdots\varphi_{n}=\varphi$. Moreover,
pseudo-polynomial functions naturally subsume Sugeno utility functions,
and~several axiomatizations were established for this function class in
\cite{CW1}. The question of factorizing a given Sugeno utility function into a
composition (\ref{eq:pseudo1}) was addressed in \cite{CW2}, where a method for
producing such a factorization was presented.

In the current paper we extend the previous results by letting $X_{1}%
,\ldots,X_{n}$ to be arbitrary sets and $Y$ to be an arbitrary finite
distributive lattice, thus subsuming the frameworks in \cite{CouMar5,CW1,CW2}.
Moreover, we develop general tools which allow us to produce all possible
factorizations of a given pseudo-polynomial function into compositions
(\ref{eq:pseudo1}) of a lattice polynomial function $p\colon Y^{n}\rightarrow
Y$ with maps $\varphi_{k}\colon X_{k}\rightarrow Y$.

The structure of the paper is as follows. In Section~\ref{sect preliminaries}
we introduce the basic notions and terminology needed throughout the paper,
and recall some preliminary results. For further background on aggregation
functions and their use in decision making, we refer the reader to
\cite{BouDubPraPir09,GraMarMesPap09}; for basics in the theory of lattices,
see \cite{DavPri,Grae03}. In Section~\ref{sect characterization} we develop a
general framework used to derive an axiomatization of pseudo-polynomial
functions of somewhat different nature than those proposed in
~\cite{CouMar5,CW1,CW2}, and which will provide tools for determining all
possible factorizations of given pseudo-polynomial functions in
Section~\ref{sect factorization}. These results are then illustrated in
Section~\ref{sect example} by means of a concrete example, and in
Section~\ref{sect chains} we show how this new procedure can be applied to
derive the algorithm provided in \cite{CW2,CW3}.

\section{Preliminaries\label{sect preliminaries}}

Throughout this paper, $Y$ is assumed to be a finite distributive lattice with
meet and join operations denoted by $\wedge$ and $\vee$, respectively. Being
finite, $Y$ has a least element and a greatest element, denoted by $0$ and
$1$, respectively. By Birkhoff's Representation Theorem \cite{Bir}, $Y$ can be
embedded into $\mathcal{P}\left(  U\right)  $, the power set of a finite set
$U$. Identifying $Y$ with its image under this embedding, we will consider $Y$
as being a sublattice of $\mathcal{P}\left(  U\right)  $ with $0=\emptyset$
and $1=U$. The complement of a set $S\in\mathcal{P}\left(  U\right)  $ will be
denoted by $\overline{S}$. Since $Y$ is closed under intersections, it induces
a closure operator $\operatorname{cl}$ on $U$, and since $Y$ is closed under
unions, it also induces a dual closure operator $\operatorname{int}$ (also
known as \textquotedblleft interior operator\textquotedblright):%
\[
\operatorname{cl}\left(  S\right)  :=\bigwedge_{\substack{y\in Y\\y\geq
S}}y,\quad\operatorname{int}\left(  S\right)  :=\bigvee_{\substack{y\in
Y\\y\leq S}}y.
\]
It is easy to verify that these two operators satisfy the following identities
for any $S_{1},S_{2}\in\mathcal{P}\left(  U\right)  $:
\[
\operatorname{cl}\left(  S_{1}\vee S_{2}\right)  =\operatorname{cl}\left(
S_{1}\right)  \vee\operatorname{cl}\left(  S_{2}\right)  ,\quad
\operatorname{int}\left(  S_{1}\wedge S_{2}\right)  =\operatorname{int}\left(
S_{1}\right)  \wedge\operatorname{int}\left(  S_{2}\right)  .
\]

A function $p\colon Y^{n}\rightarrow Y$ is a \emph{polynomial function} if it
can be obtained as a composition of the lattice operations $\wedge$ and $\vee$
with variables and constants. As observed in \cite{Mar09}, (\emph{discrete})
\emph{Sugeno integrals} coincide exactly with those lattice polynomial
functions $p$ which are idempotent, i.e., satisfy the identity $p\left(
y,\ldots,y\right)  =y$. An important lattice polynomial function (in fact, a
Sugeno integral) is the \emph{median} function $\operatorname{med}\colon
Y^{3}\rightarrow Y$ defined by%
\begin{align*}
\operatorname{med}\left(  y_{1},y_{2},y_{3}\right)   &  =\left(  y_{1}\wedge
y_{2}\right)  \vee\left(  y_{2}\wedge y_{3}\right)  \vee\left(  y_{3}\wedge
y_{1}\right) \\
&  =\left(  y_{1}\vee y_{2}\right)  \wedge\left(  y_{2}\vee y_{3}\right)
\wedge\left(  y_{3}\vee y_{1}\right)  .
\end{align*}
It is useful to observe that the above expressions for the median can be
simplified when two of the arguments are comparable:%
\begin{equation}
\operatorname{med}\left(  s,y,t\right)  =s\vee\left(  t\wedge y\right)  \text{
whenever }s\leq t. \label{eq med(a,x,b) with a<b}%
\end{equation}

Polynomial functions over bounded distributive lattices have very neat
representations, for instance, in disjunctive normal form \cite{Goo67}. To
describe this disjunctive normal form, let us define $\mathbf{1}_{I}$ to be
the \emph{characteristic vector} of $I\subseteq\lbrack n]:=\left\{
1,\ldots,n\right\}  $, i.e., the $n$-tuple in $Y^{n}$ whose $i$-th component
is $1$ if $i\in I$, and $0$ otherwise.

\begin{theorem}
[Goodstein \cite{Goo67}]\label{prop:DNF(f)} A function $p\colon Y^{n}%
\rightarrow Y$ is a polynomial function if and only if
\begin{equation}
p(y_{1},\ldots,y_{n})=\bigvee_{I\subseteq\lbrack n]}\big(p(\mathbf{1}%
_{I})\wedge\bigwedge_{i\in I}y_{i}\big). \label{eq:Good}%
\end{equation}
Furthermore, the function given by \textup{(\ref{eq:Good})} is a Sugeno
integral if and only if $p(\mathbf{0})=0$ and $p(\mathbf{1})=1$.
\end{theorem}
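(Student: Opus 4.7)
The plan is to establish the Goodstein identity (\ref{eq:Good}) by induction on the construction of polynomial terms, and then derive the Sugeno integral characterization by substituting $\mathbf{0}$ and $\mathbf{1}$ into it. The right-to-left direction of the first assertion is immediate, since the right-hand side of (\ref{eq:Good}) is manifestly built from the variables $y_{i}$ and the constants $p(\mathbf{1}_{I})\in Y$ using only $\wedge$ and $\vee$, and hence is a polynomial function.

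For the left-to-right direction, I would let $\mathcal{G}$ denote the class of functions $p\colon Y^{n}\to Y$ which coincide with their Goodstein expansion (\ref{eq:Good}), and verify by induction on term complexity that every polynomial function lies in $\mathcal{G}$. Each constant function $p\equiv c$ lies in $\mathcal{G}$ because the $I=\emptyset$ summand of the expansion evaluates to $c\wedge\bigwedge_{i\in\emptyset}y_{i}=c\wedge 1=c$, and every other summand is dominated by $c$; each projection $\pi_{k}$ lies in $\mathcal{G}$ because $\pi_{k}(\mathbf{1}_{I})=1$ precisely when $k\in I$, so that the $I=\{k\}$ summand recovers $y_{k}$ while the remaining summands are bounded above by $y_{k}$. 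Closure of $\mathcal{G}$ under $\vee$ follows by factoring joins out coefficient-by-coefficient using $(p_{1}\vee p_{2})(\mathbf{1}_{I})=p_{1}(\mathbf{1}_{I})\vee p_{2}(\mathbf{1}_{I})$.

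The main obstacle, as I see it, is closure of $\mathcal{G}$ under $\wedge$. Applying distributivity to the meet of two Goodstein expansions yields
\[
\bigvee_{I,J\subseteq[n]}\Bigl(p_{1}(\mathbf{1}_{I})\wedge p_{2}(\mathbf{1}_{J})\wedge\bigwedge_{i\in I\cup J}y_{i}\Bigr),
\]
and I need to collapse each group indexed by a fixed $K=I\cup J$ to the single summand $p_{1}(\mathbf{1}_{K})\wedge p_{2}(\mathbf{1}_{K})\wedge\bigwedge_{i\in K}y_{i}$. This is where monotonicity of polynomial functions is essential; I would establish it by a short parallel induction on term complexity. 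With monotonicity in hand, $p_{1}(\mathbf{1}_{I})\leq p_{1}(\mathbf{1}_{K})$ and $p_{2}(\mathbf{1}_{J})\leq p_{2}(\mathbf{1}_{K})$ whenever $I,J\subseteq K$, while the choice $I=J=K$ shows this upper bound is attained, completing the reduction.

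For the second assertion, if $p$ is idempotent then evaluating at $\mathbf{0}$ and $\mathbf{1}$ gives $p(\mathbf{0})=0$ and $p(\mathbf{1})=1$. Conversely, assuming these boundary conditions and substituting $y_{1}=\cdots=y_{n}=y$ into (\ref{eq:Good}), the $I=\emptyset$ summand vanishes by $p(\mathbf{0})=0$, each remaining summand $p(\mathbf{1}_{I})\wedge y$ is bounded above by $y$, and the summand for $I=[n]$ equals $p(\mathbf{1})\wedge y=y$, so $p(y,\ldots,y)=y$.
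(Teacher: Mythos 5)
Your proposal is correct, but note that the paper itself offers no proof to compare against: Theorem~\ref{prop:DNF(f)} is stated as Goodstein's theorem and cited to \cite{Goo67}, so your argument is judged on its own merits. As a self-contained proof it is sound and standard. The easy direction (the right-hand side of (\ref{eq:Good}) is a polynomial function) is fine; the base cases for constants and projections use the correct convention $\bigwedge_{i\in\emptyset}y_i=1$; closure under $\vee$ is immediate by distributivity; and you correctly identify closure under $\wedge$ as the crux, where distributing the two expansions gives terms indexed by pairs $(I,J)$ that must be collapsed to the single term indexed by $K=I\cup J$. The monotonicity needed there is genuinely required, and your parallel induction supplies it; alternatively, you could get it for free, since any function already in $\mathcal{G}$ is a join of meets of variables and constants and hence order-preserving, so the induction hypothesis itself yields $p_1(\mathbf{1}_I)\leq p_1(\mathbf{1}_K)$ for $I\subseteq K$. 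The Sugeno part is also correct, with the small cosmetic caveat that in the "only if" direction one should evaluate the function given by (\ref{eq:Good}) at $\mathbf{0}$ and $\mathbf{1}$ (the $I=\emptyset$ term gives $p(\mathbf{0})$, and monotonicity in $I$ gives $p(\mathbf{1})$ at the top), which is what your computation amounts to once the first assertion identifies that function with $p$. Finally, observe that your argument nowhere uses finiteness of $Y$, only that it is a bounded distributive lattice, which matches the generality of Goodstein's original result and of its use in this paper.
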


\begin{remark}
\label{remark unary polynomials}Let us note that in the case $n=1$,
Goodstein's theorem shows that unary polynomial functions $p$ are exactly the
functions of the form $p\left(  y\right)  =s\vee\left(  t\wedge y\right)  $
with $s=p\left(  0\right)  \leq p\left(  1\right)  =t$, and these can be
written as $p\left(  y\right)  =\operatorname{med}\left(  s,y,t\right)  $
according to (\ref{eq med(a,x,b) with a<b}).
\end{remark}

Let $X_{1},\ldots,X_{n}$ be arbitrary sets with at least two elements, and for
each $k\in\left[  n\right]  $ let us fix two distinct elements $0_{X_{k}%
},1_{X_{k}}$ of $X_{k}$ . We shall say that a mapping $\varphi_{k}\colon
X_{k}\rightarrow Y$ satisfies the \emph{boundary condition} (for $0_{X_{k}}$
and $1_{X_{k}}$) if for every $x_{k}\in X_{k}$,
\begin{equation}
\varphi_{k}(0_{X_{k}})\leq\varphi_{k}(x_{k})\leq\varphi_{k}(1_{X_{k}}).
\label{eq BC1}%
\end{equation}
Observe that if $X_{k}$ is a partially ordered set with least element
$0_{X_{k}}$ and greatest element $1_{X_{k}}$, and if $\varphi_{k}$ is
order-preserving, then it satisfies the boundary condition (cf. also
Remark~\ref{remark BC1 for chains}). With no danger of ambiguity, we simply
write $0$ and $1$ instead of $0_{X_{k}}$ and $1_{X_{k}}$ in the sequel.

A function $f\colon\prod_{i\in\lbrack n]}X_{i}\rightarrow Y$ is said to be a
\emph{pseudo-polynomial function}, if there is a polynomial function $p\colon
Y^{n}\rightarrow Y$ and there are unary functions $\varphi_{k}\colon
X_{k}\rightarrow Y\left(  k\in\lbrack n]\right)  $, satisfying the boundary
condition, such that
\begin{equation}
f(\mathbf{x})=p\left(  \boldsymbol{\varphi}\left(  \mathbf{x}\right)  \right)
=p(\varphi_{1}(x_{1}),\ldots,\varphi_{n}(x_{n})) \label{eq:generalPol}%
\end{equation}
holds for all $\mathbf{x}=\left(  x_{1},\ldots,x_{n}\right)  \in\prod
_{i\in\lbrack n]}X_{i}$. If $p$ is a Sugeno integral, then we say that $f$ is
a \emph{pseudo-Sugeno integral}. As it turns out, the notions of
pseudo-polynomial function and pseudo-Sugeno integral are equivalent. This
result was proved in \cite{CW1,CW3} for chains $Y$, but the proof given there
actually just uses the fact that $Y$ is a distributive lattice, hence it
applies verbatim to our setting.

\begin{proposition}
\label{prop:PseudoPol-Sugeno} A function $f\colon\prod_{i\in\lbrack n]}%
X_{i}\rightarrow Y$ is a pseudo-polynomial function if and only if it is a
pseudo-Sugeno integral.
\end{proposition}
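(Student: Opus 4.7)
The forward direction is immediate: every Sugeno integral is by definition a polynomial function, so any pseudo-Sugeno integral is automatically a pseudo-polynomial function. The real content is the converse, and my plan is to absorb the ``extra constants'' of a general polynomial function into the unary maps $\varphi_k$.

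Suppose $f(\mathbf{x})=p(\varphi_{1}(x_{1}),\ldots,\varphi_{n}(x_{n}))$ with $p$ a polynomial function. Since polynomial functions are built from monotone operations, $p$ is isotone in each argument; setting $s:=p(\mathbf{0})$ and $t:=p(\mathbf{1})$ we have $s\leq p(\mathbf{1}_{I})\leq t$ for every $I\subseteq[n]$. Starting from the Goodstein DNF \eqref{eq:Good} of $p$, I would define a new $n$-ary function $q\colon Y^{n}\rightarrow Y$ by replacing only the extreme coefficients:
\begin{equation*}
q(y_{1},\ldots,y_{n}):=\bigvee_{\emptyset\neq I\subsetneq[n]}\Big(p(\mathbf{1}_{I})\wedge\bigwedge_{i\in I}y_{i}\Big)\vee\bigwedge_{i\in[n]}y_{i}.
\end{equation*}
Then $q$ is a polynomial function with $q(\mathbf{0})=0$ and $q(\mathbf{1})=1$, so by Theorem~\ref{prop:DNF(f)} it is a Sugeno integral. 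Simultaneously I would define $\psi_{k}(x_{k}):=\operatorname{med}(s,\varphi_{k}(x_{k}),t)=s\vee(t\wedge\varphi_{k}(x_{k}))$, the second equality coming from \eqref{eq med(a,x,b) with a<b}. Each $\psi_{k}$ still satisfies the boundary condition because $\varphi_{k}$ does and $s\vee(t\wedge{-})$ is order-preserving.

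It remains to verify $q(\psi_{1}(x_{1}),\ldots,\psi_{n}(x_{n}))=p(\varphi_{1}(x_{1}),\ldots,\varphi_{n}(x_{n}))$. The pivotal identity is
\begin{equation*}
\bigwedge_{i\in I}\psi_{i}(x_{i})=s\vee\Big(t\wedge\bigwedge_{i\in I}\varphi_{i}(x_{i})\Big),
\end{equation*}
which follows by repeated application of the distributive identity $(s\vee a)\wedge(s\vee b)=s\vee(a\wedge b)$ valid in $Y$. Plugging this into the DNF of $q$ and using $s\leq p(\mathbf{1}_{I})\leq t$ to reduce each summand $p(\mathbf{1}_{I})\wedge\bigl(s\vee(t\wedge\bigwedge_{I}\varphi_{i})\bigr)$ to $s\vee(p(\mathbf{1}_{I})\wedge\bigwedge_{I}\varphi_{i})$, and noting that the $I=[n]$ block contributes $s\vee(t\wedge\bigwedge_{[n]}\varphi_{i})=s\vee(p(\mathbf{1})\wedge\bigwedge_{[n]}\varphi_{i})$, the entire join collapses back to the Goodstein DNF of $p$ evaluated at $\boldsymbol{\varphi}(\mathbf{x})$, the additional $s$'s being swallowed by the $I=\emptyset$ term $p(\mathbf{0})=s$. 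The main obstacle is bookkeeping: one must check that the inserted constants $s$ and the clipping by $t$ cancel correctly against the coefficients $p(\mathbf{1}_{I})$. Once the pivotal identity above is isolated, this is a purely routine computation inside the distributive lattice $Y$, and works verbatim regardless of whether $Y$ is a chain, as already noted in \cite{CW1,CW3}.
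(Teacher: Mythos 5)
Your proof is correct, and it is essentially the argument the paper relies on: Proposition~\ref{prop:PseudoPol-Sugeno} is not reproved here but deferred to \cite{CW1,CW3}, where the same device is used --- normalize the extreme coefficients of the Goodstein DNF of $p$ to get a Sugeno integral $q$ and clip each $\varphi_{k}$ by $y\mapsto\operatorname{med}\left(  p\left(  \mathbf{0}\right)  ,y,p\left(  \mathbf{1}\right)  \right)  =s\vee\left(  t\wedge y\right)  $ --- and, as the paper remarks, the computation only uses distributivity of $Y$, so it transfers verbatim from chains to finite distributive lattices. Your pivotal identity and the collapse of $q\left(  \boldsymbol{\psi}\left(  \mathbf{x}\right)  \right)  $ back to the DNF of $p$ at $\boldsymbol{\varphi}\left(  \mathbf{x}\right)  $ (with the extra $s$'s absorbed by the $I=\emptyset$ term) check out, including the boundary condition for the clipped maps $\psi_{k}$.
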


Clearly, if $f$ is a pseudo-polynomial function, then it satisfies the
following $n$-variable analogue of the boundary condition (\ref{eq BC1}):%
\begin{equation}
f\left(  \mathbf{x}_{k}^{0}\right)  \leq f\left(  \mathbf{x}\right)  \leq
f\left(  \mathbf{x}_{k}^{1}\right)  \text{ for all }k\in\left[  n\right]
,\mathbf{x\in}\prod_{i\in\lbrack n]}X_{i}, \label{eq BC}%
\end{equation}
where $\mathbf{x}_{k}^{a}\in\prod_{i\in\lbrack n]}X_{i}$ denotes the $n$-tuple
which coincides with $\mathbf{x}$ in all but the $k$-th component, whose value
is $a$.

\begin{remark}
Note that the particular orderings $\varphi_{k}(0_{X_{k}})\leq\varphi
_{k}(1_{X_{k}})$ and $f\left(  \mathbf{x}_{k}^{0}\right)  \leq f\left(
\mathbf{x}_{k}^{1}\right)  $ in (\ref{eq BC1}) and (\ref{eq BC}) could be
reversed as the choice of $0_{X_{k}}$ and $1_{X_{k}}$ is arbitrary. Hence, the
current notion of boundary condition is not more restrictive than the one used
in \cite{CW3}.
\end{remark}

Next we define a property that can be used to characterize pseudo-polynomial
functions. We say that $f\colon\prod_{i\in\lbrack n]}X_{i}\rightarrow Y$ is
\emph{pseudo-median decomposable} if for each $k\in\lbrack n]$ there is a
unary function $\varphi_{k}\colon X_{k}\rightarrow Y$ satisfying
(\ref{eq BC1}), such that
\begin{equation}
f(\mathbf{x})=\operatorname{med}\big(f(\mathbf{x}_{k}^{0}),\varphi_{k}%
(x_{k}),f(\mathbf{x}_{k}^{1})\big) \label{eq pseudo-median decomposition}%
\end{equation}
for every $\mathbf{x}\in\prod_{i\in\lbrack n]}X_{i}$. Note that if $f$ is
pseudo-median decomposable w.r.t. unary functions $\varphi_{k}\colon
X_{k}\rightarrow Y\left(  k\in\lbrack n]\right)  $ satisfying (\ref{eq BC1}),
then (\ref{eq BC}) holds.

The following theorem shows that every pseudo-median decomposable function is
a pseudo-polynomial function, and provides a disjunctive normal form of a
polynomial function $p_{0}$ which can be used to factorize $f$. This theorem
appears in \cite{CW2,CW3} for the special case of chains; for the sake of
self-containedness, we reproduce the proof here. We use the notation
$\widehat{\mathbf{1}}_{I}$ for the characteristic vector of $I\subseteq\lbrack
n]$ in $\prod_{i\in\lbrack n]}X_{i}$, i.e., $\widehat{\mathbf{1}}_{I}\in
\prod_{i\in\lbrack n]}X_{i}$ is the $n$-tuple whose $i$-th component is
$1_{X_{i}}$ if $i\in I$, and $0_{X_{i}}$ otherwise.

\begin{theorem}
\label{thm dnf for pseudo-Sugeno}If $f\colon\prod_{i\in\lbrack n]}%
X_{i}\rightarrow Y$ is pseudo-median decomposable w.r.t. unary functions
$\varphi_{k}\colon X_{k}\rightarrow Y\left(  k\in\lbrack n]\right)  $, then
$f\left(  \mathbf{x}\right)  =p_{0}(\boldsymbol{\varphi}\left(  \mathbf{x}%
\right)  )$, where the polynomial function $p_{0}$ is given by%
\begin{equation}
p_{0}\left(  y_{1},\ldots,y_{n}\right)  =\bigvee\limits_{I\subseteq\left[
n\right]  }\bigl(f\bigl(\widehat{\mathbf{1}}_{I}\bigr)\wedge\bigwedge
\limits_{i\in I}y_{i}\bigr). \label{eq p0 dnf}%
\end{equation}

\end{theorem}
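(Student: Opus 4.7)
The plan is to prove the identity $f(\mathbf{x})=p_{0}(\boldsymbol{\varphi}(\mathbf{x}))$ by induction on $n$, exploiting pseudo-median decomposability to "peel off" one variable at a time. Note that $p_{0}$ is automatically a polynomial function by Goodstein's theorem (Theorem~\ref{prop:DNF(f)}), so nothing needs to be verified on that front.

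For the base case $n=1$, pseudo-median decomposability yields $f(x)=\operatorname{med}(f(0),\varphi_{1}(x),f(1))$. Since pseudo-median decomposability implies the boundary condition (\ref{eq BC}), we have $f(0)\leq f(1)$, so (\ref{eq med(a,x,b) with a<b}) gives $f(x)=f(0)\vee(f(1)\wedge\varphi_{1}(x))$. This matches (\ref{eq p0 dnf}) since the sum ranges over $I=\emptyset$ (contributing $f(\widehat{\mathbf{1}}_{\emptyset})=f(0)$) and $I=\{1\}$ (contributing $f(\widehat{\mathbf{1}}_{\{1\}})\wedge\varphi_{1}(x)=f(1)\wedge\varphi_{1}(x)$).

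For the inductive step, I would first observe that for fixed $x_{n}\in\{0,1\}$, the $(n-1)$-variable function $g_{x_{n}}(x_{1},\ldots,x_{n-1}):=f(x_{1},\ldots,x_{n-1},x_{n})$ is pseudo-median decomposable with respect to $\varphi_{1},\ldots,\varphi_{n-1}$; this is immediate by specializing $x_{n}$ in the pseudo-median decomposition of $f$ along any variable $k<n$. Now apply the decomposition of $f$ along the $n$-th variable: since $f(\mathbf{x}_{n}^{0})\leq f(\mathbf{x}_{n}^{1})$ by (\ref{eq BC}), equation (\ref{eq med(a,x,b) with a<b}) gives
\[
f(\mathbf{x})=f(\mathbf{x}_{n}^{0})\vee\bigl(f(\mathbf{x}_{n}^{1})\wedge\varphi_{n}(x_{n})\bigr).
\]
By the induction hypothesis applied to $g_{0}$ and $g_{1}$, each of $f(\mathbf{x}_{n}^{0})$ and $f(\mathbf{x}_{n}^{1})$ equals a DNF expression indexed by $J\subseteq[n-1]$ with coefficients $g_{0}(\widehat{\mathbf{1}}_{J})=f(\widehat{\mathbf{1}}_{J})$ and $g_{1}(\widehat{\mathbf{1}}_{J})=f(\widehat{\mathbf{1}}_{J\cup\{n\}})$ respectively. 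Substituting and reindexing, the terms from $f(\mathbf{x}_{n}^{0})$ yield exactly the $I\subseteq[n]$ with $n\notin I$, and the terms from $f(\mathbf{x}_{n}^{1})\wedge\varphi_{n}(x_{n})$ yield exactly the $I$ with $n\in I$. Their join is precisely $p_{0}(\boldsymbol{\varphi}(\mathbf{x}))$.

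The only genuine subtlety is the bookkeeping to see that every subset $I\subseteq[n]$ appears exactly once with the correct coefficient $f(\widehat{\mathbf{1}}_{I})$ after splitting on whether $n\in I$; the rest is the already-justified application of (\ref{eq med(a,x,b) with a<b}) and a careful statement of what it means to restrict a pseudo-median decomposable function by fixing one variable at a boundary value. No deeper obstacle is expected.
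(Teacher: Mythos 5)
Your proposal is correct and follows essentially the same route as the paper's own proof: induction on $n$, with the base case handled via (\ref{eq med(a,x,b) with a<b}), and the inductive step obtained by decomposing along the $n$-th variable, applying the induction hypothesis to the restrictions $f(\cdot,\ldots,\cdot,0)$ and $f(\cdot,\ldots,\cdot,1)$ (your $g_0$, $g_1$ are the paper's $f_0$, $f_1$), and recombining by distributivity with the split on whether $n\in I$. No gaps; the bookkeeping you flag is exactly the routine computation the paper carries out.
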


\begin{proof}
We need to prove that the following identity holds:
\begin{equation}
f\left(  x_{1},\ldots,x_{n}\right)  =\bigvee\limits_{I\subseteq\left[
n\right]  }\bigl(f\bigl(\widehat{\mathbf{1}}_{I}\bigr)\wedge\bigwedge
\limits_{i\in I}\varphi_{i}\left(  x_{i}\right)  \bigr).
\label{eq dnf for overall utility}%
\end{equation}
We proceed by induction on $n$. If $n=1$, then the right hand side of
(\ref{eq dnf for overall utility}) takes the form $f\left(  0\right)
\vee\left(  f\left(  1\right)  \wedge\varphi_{1}\left(  x_{1}\right)  \right)
$. From (\ref{eq BC}) it follows that $f\left(  0\right)  \leq f\left(
1\right)  $, and then, using (\ref{eq med(a,x,b) with a<b}), we can rewrite
$f\left(  0\right)  \vee\left(  f\left(  1\right)  \wedge\varphi_{1}\left(
x_{1}\right)  \right)  $ as $\operatorname{med}\left(  f\left(  0\right)
,\varphi_{1}\left(  x_{1}\right)  ,f\left(  1\right)  \right)  $, which equals
$f\left(  x_{1}\right)  $ by (\ref{eq pseudo-median decomposition}).

Now suppose that the statement of the theorem is true for all pseudo-median
decomposable functions in $n-1$ variables. Let $f_{0}$ and $f_{1}$ be the
$(n-1)$-ary functions defined by%
\begin{align*}
f_{0}\left(  x_{1},\ldots,x_{n-1}\right)   &  =f\left(  x_{1},\ldots
,x_{n-1},0\right)  ,\\
f_{1}\left(  x_{1},\ldots,x_{n-1}\right)   &  =f\left(  x_{1},\ldots
,x_{n-1},1\right)  .
\end{align*}
Observe that (\ref{eq BC}) implies $f_{0}\leq f_{1}$. Applying the
pseudo-median decomposition to $f$ with $k=n$ and rewriting the median using
(\ref{eq med(a,x,b) with a<b}), we obtain
\begin{align}
f\left(  x_{1},\ldots,x_{n}\right)   &  =\operatorname{med}\left(
f_{0}\left(  x_{1},\ldots,x_{n-1}\right)  ,\varphi_{n}\left(  x_{n}\right)
,f_{1}\left(  x_{1},\ldots,x_{n-1}\right)  \right)
\label{eq med in nth variable}\\
&  =f_{0}\left(  x_{1},\ldots,x_{n-1}\right)  \vee\left(  f_{1}\left(
x_{1},\ldots,x_{n-1}\right)  \wedge\varphi_{n}\left(  x_{n}\right)  \right)
.\nonumber
\end{align}
It is easy to verify that $f_{0}$ and $f_{1}$ are pseudo-median decomposable
w.r.t. $\varphi_{1},\ldots,\varphi_{n-1}$, therefore we can apply the
induction hypothesis to these functions:%
\begin{align*}
f_{0}\left(  x_{1},\ldots,x_{n-1}\right)  =  &  \bigvee\limits_{I\subseteq
\left[  n-1\right]  }\bigl(f_{0}\bigl(\widehat{\mathbf{1}}_{I}\bigr)\wedge
\bigwedge\limits_{i\in I}\varphi_{i}\left(  x_{i}\right)  \bigr)=\\
&  \bigvee\limits_{I\subseteq\left[  n-1\right]  }\bigl(f\bigl(\widehat
{\mathbf{1}}_{I}\bigr)\wedge\bigwedge\limits_{i\in I}\varphi_{i}\left(
x_{i}\right)  \bigr),\\
f_{1}\left(  x_{1},\ldots,x_{n-1}\right)  =  &  \bigvee\limits_{I\subseteq
\left[  n-1\right]  }\bigl(f_{1}\bigl(\widehat{\mathbf{1}}_{I}\bigr)\wedge
\bigwedge\limits_{i\in I}\varphi_{i}\left(  x_{i}\right)  \bigr)=\\
&  \bigvee\limits_{I\subseteq\left[  n-1\right]  }\bigl(f\bigl(\widehat
{\mathbf{1}}_{I\cup\left\{  n\right\}  }\bigr)\wedge\bigwedge\limits_{i\in
I}\varphi_{i}\left(  x_{i}\right)  \bigr).
\end{align*}
Substituting back into (\ref{eq med in nth variable}) and using distributivity
we obtain the desired equality (\ref{eq dnf for overall utility}).
\end{proof}

Now we can prove that pseudo-median decomposability actually characterizes
pseudo-polynomial functions (see \cite{CW1,CW3} for the case of chains, where
the proof is slightly simpler).

\begin{theorem}
\label{thm pseudo-median decomposition} Let $f\colon\prod_{i\in\lbrack
n]}X_{i}\rightarrow Y$ be a function. Then $f$ is a pseudo-polynomial function
if and only if $f$ is pseudo-median decomposable.
\end{theorem}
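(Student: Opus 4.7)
The plan is to split the biconditional. The implication pseudo-median decomposable $\Rightarrow$ pseudo-polynomial is essentially free from Theorem~\ref{thm dnf for pseudo-Sugeno}: if $f$ is decomposable w.r.t. unary maps $\varphi_{k}$ satisfying~(\ref{eq BC1}), then that theorem produces a polynomial function $p_{0}$ (given by~(\ref{eq p0 dnf})) with $f=p_{0}\circ\boldsymbol{\varphi}$, and since the $\varphi_{k}$ already satisfy the boundary condition, $f$ is pseudo-polynomial by definition. So all the work is in the converse.

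For the converse, assume $f(\mathbf{x})=p(\varphi_{1}(x_{1}),\ldots,\varphi_{n}(x_{n}))$ with each $\varphi_{k}$ satisfying~(\ref{eq BC1}). Fix $k\in[n]$ and view $p$ as a unary polynomial in $y_{k}$ with the remaining arguments treated as parameters. Apply Goodstein's DNF (Theorem~\ref{prop:DNF(f)}) and split the join over $I\subseteq[n]$ according to whether $k\in I$:
\[
p(y_{1},\ldots,y_{n})=\bigvee_{I\not\ni k}\Bigl(p(\mathbf{1}_{I})\wedge\bigwedge_{i\in I}y_{i}\Bigr)\;\vee\;y_{k}\wedge\bigvee_{I\ni k}\Bigl(p(\mathbf{1}_{I})\wedge\bigwedge_{i\in I\setminus\{k\}}y_{i}\Bigr).
\]
Call the first double-join $s$ and the second one $T$ (both depend only on the $y_{i}$ for $i\neq k$). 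Substituting $y_{i}:=\varphi_{i}(x_{i})$ for $i\neq k$ and evaluating at $y_{k}=0$ and $y_{k}=1$ yields $s=f(\mathbf{x}_{k}^{0})$ and $s\vee T=f(\mathbf{x}_{k}^{1})$, respectively. The absorption identity $s\vee(y_{k}\wedge T)=s\vee\bigl(y_{k}\wedge(s\vee T)\bigr)$ then gives
\[
p(\varphi_{1}(x_{1}),\ldots,y_{k},\ldots,\varphi_{n}(x_{n}))=f(\mathbf{x}_{k}^{0})\vee\bigl(y_{k}\wedge f(\mathbf{x}_{k}^{1})\bigr).
\]
By~(\ref{eq BC}) we have $f(\mathbf{x}_{k}^{0})\le f(\mathbf{x}_{k}^{1})$, so (\ref{eq med(a,x,b) with a<b}) rewrites the right-hand side as $\operatorname{med}\bigl(f(\mathbf{x}_{k}^{0}),y_{k},f(\mathbf{x}_{k}^{1})\bigr)$. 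Finally, specializing $y_{k}=\varphi_{k}(x_{k})$ delivers the pseudo-median decomposition~(\ref{eq pseudo-median decomposition}) using the very same $\varphi_{k}$'s that appear in the factorization, and these already satisfy (\ref{eq BC1}).

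There is no real obstacle; the only point worth being careful about is the DNF manipulation that extracts the unary dependence on $y_{k}$ in the form $s\vee(y_{k}\wedge t)$ with $s\le t$. This is precisely the unary-case observation recorded in Remark~\ref{remark unary polynomials}, so once Goodstein's theorem is in hand the rest is algebraic bookkeeping plus an appeal to the identity~(\ref{eq med(a,x,b) with a<b}).
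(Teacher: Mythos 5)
Your forward direction is fine and matches the paper: pseudo-median decomposability plus Theorem~\ref{thm dnf for pseudo-Sugeno} gives the factorization through $p_{0}$. The converse, however, contains a genuine error. After extracting the unary dependence $p(\ldots,y_{k},\ldots)=s\vee(y_{k}\wedge T)$ (with the other arguments already specialized to $\varphi_{i}(x_{i})$), you identify $s=f(\mathbf{x}_{k}^{0})$ and $s\vee T=f(\mathbf{x}_{k}^{1})$ by ``evaluating at $y_{k}=0$ and $y_{k}=1$''. But $f(\mathbf{x}_{k}^{0})$ is the value of this unary polynomial at $y_{k}=\varphi_{k}(0_{X_{k}})$, not at $y_{k}=0$ in $Y$: the boundary condition (\ref{eq BC1}) only requires $\varphi_{k}(0_{X_{k}})$ to be the least value attained by $\varphi_{k}$, not the bottom of $Y$ (and likewise $\varphi_{k}(1_{X_{k}})$ need not be the top of $Y$). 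Consequently your displayed identity $p(\varphi_{1}(x_{1}),\ldots,y_{k},\ldots,\varphi_{n}(x_{n}))=f(\mathbf{x}_{k}^{0})\vee\bigl(y_{k}\wedge f(\mathbf{x}_{k}^{1})\bigr)$ is false as an identity in $y_{k}$; for instance with $n=1$, $p(y)=y$ and $\varphi_{1}$ constantly equal to $1$, the left-hand side at $y=0$ is $0$ while the right-hand side is $1$.

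What you actually need is only the specialized equality $u(z)=\operatorname{med}\bigl(u(a),z,u(b)\bigr)$, where $u(y)=s\vee(t\wedge y)$ with $s\leq t$, $a=\varphi_{k}(0)$, $b=\varphi_{k}(1)$, $z=\varphi_{k}(x_{k})$; and this is true precisely because $a\leq z\leq b$ --- a hypothesis your computation never invokes. Closing the gap requires the distributivity computation of the paper's proof:
\[
u(a)\vee\bigl(z\wedge u(b)\bigr)=s\vee(t\wedge a)\vee(z\wedge s)\vee(t\wedge b\wedge z)=s\vee\bigl(t\wedge(a\vee z)\bigr)=s\vee(t\wedge z)=u(z),
\]
where $z\leq b$ eliminates the factor $b$ and $a\leq z$ absorbs $t\wedge a$. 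So your overall plan (reduce to the unary DNF of Remark~\ref{remark unary polynomials} and apply (\ref{eq med(a,x,b) with a<b})) is the same as the paper's, but the shortcut that replaces this computation by an evaluation of the polynomial at the lattice bounds $0$ and $1$ of $Y$ does not work.
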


\begin{proof}
Sufficiency follows from Theorem~\ref{thm dnf for pseudo-Sugeno}, so we only
need to show that if $f$ is a pseudo-polynomial function, then it is
pseudo-median decomposable. Suppose that $f(\mathbf{x})=p\left(
\boldsymbol{\varphi}\left(  \mathbf{x}\right)  \right)  $ as in
(\ref{eq:generalPol}), and let $k\in\left[  n\right]  $. We have to prove that
(\ref{eq pseudo-median decomposition}) holds for all $\mathbf{x}\in\prod
_{i\in\lbrack n]}X_{i}$. Regarding $x_{i}$ as a fixed element of $X_{i}$ for
each $i\neq k$, we can define a unary polynomial function $u\colon
Y\rightarrow Y$ by%
\[
u\left(  y\right)  =p(\varphi_{1}(x_{1}),\ldots,\varphi_{k-1}(x_{k-1}%
),y,\varphi_{k+1}(x_{k+1}),\ldots,\varphi_{n}(x_{n})).
\]

To simplify notation, let us write $a:=\varphi_{k}\left(  0\right)
,z:=\varphi_{k}\left(  x_{k}\right)  ,b:=\varphi_{k}\left(  1\right)  $, and
let us note that the boundary condition (\ref{eq BC}) yields $a\leq z\leq b$.
With this notation (\ref{eq pseudo-median decomposition}) reads as $u\left(
z\right)  =\operatorname{med}\left(  u\left(  a\right)  ,z,u\left(  b\right)
\right)  $. In order to verify this equality, we write $u$ in disjunctive
normal form as in Remark~\ref{remark unary polynomials}: $u\left(  y\right)
=s\vee\left(  t\wedge y\right)  $, where $s\leq t$. Now the proof is a
straightforward computation, making heavy use of distributivity:%

\begin{align*}
\operatorname{med}\left(  u\left(  a\right)  ,z,u\left(  b\right)  \right)
&  =u\left(  a\right)  \vee\left(  u\left(  b\right)  \wedge z\right) \\
&  =\left(  s\vee\left(  t\wedge a\right)  \right)  \vee\left(  \left(
s\vee\left(  t\wedge b\right)  \right)  \wedge z\right) \\
&  =s\vee\left(  t\wedge a\right)  \vee\left(  s\wedge z\right)  \vee\left(
t\wedge b\wedge z\right) \\
&  =s\vee\left(  t\wedge a\right)  \vee\left(  s\wedge z\right)  \vee\left(
t\wedge z\right) \\
&  =s\vee\left(  t\wedge a\right)  \vee\left(  t\wedge z\right) \\
&  =s\vee\left(  t\wedge\left(  a\vee z\right)  \right) \\
&  =s\vee\left(  t\wedge z\right)  =u\left(  z\right)  .\qedhere
\end{align*}

\end{proof}

\section{Characterization of Pseudo-polynomial Functions
\label{sect characterization}}

Let $f\colon\prod_{i\in\lbrack n]}X_{i}\rightarrow Y$ be a function satisfying
(\ref{eq BC}), and for each $k\in\left[  n\right]  $ let us define two
auxiliary functions $\Phi_{k}^{-},\Phi_{k}^{+}\colon X_{k}\rightarrow Y$ as
follows:%
\begin{equation}
\Phi_{k}^{-}\left(  a_{k}\right)  :=\bigvee_{x_{k}=a_{k}}\operatorname{cl}%
\bigl(f\left(  \mathbf{x}\right)  \wedge\overline{f\left(  \mathbf{x}_{k}%
^{0}\right)  }\bigr)\text{,\quad}\Phi_{k}^{+}\left(  a_{k}\right)
:=\bigwedge_{x_{k}=a_{k}}\operatorname{int}\bigl(f\left(  \mathbf{x}\right)
\vee\overline{f\left(  \mathbf{x}_{k}^{1}\right)  }\bigr).
\label{eq definition of  Fi_k- and Fi_k+}%
\end{equation}
Here the join and the meet range over all $\mathbf{x}\in\prod_{i\in\lbrack
n]}X_{i}$ whose $k$-th component is $a_{k}$. Note that from (\ref{eq BC}) it
follows that $\Phi_{k}^{-}$ and $\Phi_{k}^{+}$ satisfy the boundary condition
(\ref{eq BC1}). With the help of these functions, we will give a necessary and
sufficient condition for $f$ to be a pseudo-polynomial function. The following
lemma formulates a simple observation that allows us to solve equation
(\ref{eq pseudo-median decomposition}) for $\varphi_{k}(x_{k})$.

\begin{lemma}
\label{lemma med(a,x,b)=m}For any $u\leq m\leq w,v\in Y$ the following two
conditions are equivalent:%
\renewcommand{\labelenumi}{\textup{(\roman{enumi})}}
\renewcommand{\theenumi}{\labelenumi}%

\begin{enumerate}
\item \label{item 1 in lemma med(a,x,b)=m}$\operatorname{med}\left(
u,v,w\right)  =m$;

\item \label{item 2 in lemma med(a,x,b)=m}$m\wedge\overline{u}\leq v\leq
m\vee\overline{w}$.
\end{enumerate}
\end{lemma}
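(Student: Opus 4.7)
The plan is to exploit the hypothesis $u\le m\le w$ to collapse the median formula via equation~\eqref{eq med(a,x,b) with a<b}, reducing condition~(i) to the single identity $u\vee(w\wedge v)=m$. Then the two inequalities in~(ii) can be verified by direct distributive computations inside the ambient Boolean algebra $\mathcal{P}(U)$, which is legitimate since $Y$ embeds into $\mathcal{P}(U)$ and, although $\overline{u}$ and $\overline{w}$ need not lie in $Y$, the relations in~(ii) are merely inclusions of sets.

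For the implication (i)$\Rightarrow$(ii), I would substitute $m=u\vee(w\wedge v)$ into the expressions $m\wedge\overline{u}$ and $m\vee\overline{w}$. Using $u\wedge\overline{u}=\emptyset$, distributivity yields
\[
m\wedge\overline{u}=(u\wedge\overline{u})\vee(w\wedge v\wedge\overline{u})=w\wedge v\wedge\overline{u}\le v,
\]
and using $w\vee\overline{w}=U$ together with $(w\wedge v)\vee\overline{w}=v\vee\overline{w}$, one gets $m\vee\overline{w}=u\vee v\vee\overline{w}\ge v$.

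For the converse (ii)$\Rightarrow$(i), the key Boolean identity is $m=(m\wedge u)\vee(m\wedge\overline{u})=u\vee(m\wedge\overline{u})$, which holds because $u\le m$. Combining $m\wedge\overline{u}\le v$ with $m\wedge\overline{u}\le m\le w$ gives $m\wedge\overline{u}\le w\wedge v$, whence
\[
m=u\vee(m\wedge\overline{u})\le u\vee(w\wedge v).
\]
For the reverse inequality, from $v\le m\vee\overline{w}$ we obtain $w\wedge v\le w\wedge(m\vee\overline{w})=w\wedge m=m$, and since $u\le m$ this yields $u\vee(w\wedge v)\le m$. Together these give the equality $u\vee(w\wedge v)=m$, i.e.\ $\operatorname{med}(u,v,w)=m$.

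There is no serious obstacle; the only subtlety is that $v,m,u,w\in Y$ but the complements $\overline{u},\overline{w}$ must be interpreted in $\mathcal{P}(U)$, so one must be careful that every identity invoked is a distributive-lattice or Boolean-algebra identity valid in $\mathcal{P}(U)$. The proof is essentially a short chain of such manipulations, and it is natural to present it as two separate calculations for the two implications.
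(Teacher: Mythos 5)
Your proof is correct, and the subtlety you flag (that $\overline{u},\overline{w}$ live in $\mathcal{P}(U)$ rather than in $Y$, so all identities must be read there) is handled properly, since $Y$ is a sublattice of $\mathcal{P}(U)$ and both conditions are statements about set inclusion. Your implication (i)$\Rightarrow$(ii) is essentially the computation in the paper: expand $m=u\vee(w\wedge v)$ against $\overline{u}$ and $\overline{w}$ and use $u\wedge\overline{u}=\emptyset$, $w\vee\overline{w}=U$. Where you genuinely diverge is the converse. The paper first computes the two auxiliary medians $\operatorname{med}\left(u,m\wedge\overline{u},w\right)=m$ and $\operatorname{med}\left(u,m\vee\overline{w},w\right)=m$, and then sandwiches $\operatorname{med}(u,v,w)$ between them using the monotonicity of the median on $\mathcal{P}(U)$. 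You instead prove the two inequalities $m\leq u\vee(w\wedge v)$ and $u\vee(w\wedge v)\leq m$ directly, via the Boolean decomposition $m=u\vee\left(m\wedge\overline{u}\right)$ (valid since $u\leq m$) and the absorption $w\wedge\left(m\vee\overline{w}\right)=m$ (valid since $m\leq w$). The two routes cost about the same; yours is slightly more self-contained in that it never appeals to monotonicity of $\operatorname{med}$, while the paper's sandwich argument makes visible that $m\wedge\overline{u}$ and $m\vee\overline{w}$ are themselves extremal solutions $v$ of $\operatorname{med}(u,v,w)=m$, which is the structural fact exploited later in the definitions of $\Phi_{k}^{-}$ and $\Phi_{k}^{+}$.
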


\begin{proof}
Assuming that $\operatorname{med}\left(  u,v,w\right)  =m$, we can estimate
$m\wedge\overline{u}$ using (\ref{eq med(a,x,b) with a<b}) as follows:%
\begin{align*}
m\wedge\overline{u}  &  =%
\bigl(%
u\vee\left(  v\wedge w\right)
\bigr)%
\wedge\overline{u}\\
&  =\left(  u\wedge\overline{u}\right)  \vee\left(  v\wedge w\wedge
\overline{u}\right) \\
&  =0\vee\left(  v\wedge w\wedge\overline{u}\right)  \leq v\text{.}%
\end{align*}
An analogous argument shows that $v\leq m\vee\overline{w}$, and this
establishes \ref{item 1 in lemma med(a,x,b)=m}$\implies$%
\ref{item 2 in lemma med(a,x,b)=m}.

In order to prove \ref{item 2 in lemma med(a,x,b)=m}$\implies$%
\ref{item 1 in lemma med(a,x,b)=m}, let us first compute $\operatorname{med}%
\left(  u,m\wedge\overline{u},w\right)  $, again with the help of
(\ref{eq med(a,x,b) with a<b}):%
\begin{align*}
\operatorname{med}\left(  u,m\wedge\overline{u},w\right)   &  =u\vee\left(
m\wedge\overline{u}\wedge w\right) \\
&  =\left(  u\vee m\right)  \wedge\left(  u\vee\overline{u}\right)
\wedge\left(  u\vee w\right) \\
&  =m\wedge1\wedge w=m\text{.}%
\end{align*}
Similarly, we have $\operatorname{med}\left(  u,m\vee\overline{w},w\right)
=m$, and then, using \ref{item 2 in lemma med(a,x,b)=m} and the monotonicity
of the median function, we conclude%
\[
m=\operatorname{med}\left(  u,m\wedge\overline{u},w\right)  \leq
\operatorname{med}\left(  u,v,w\right)  \leq\operatorname{med}\left(
u,m\vee\overline{w},w\right)  =m,
\]
hence $\operatorname{med}\left(  u,v,w\right)  =m$.
\end{proof}

With the help of the above lemma we derive from
Theorem~\ref{thm pseudo-median decomposition} a necessary condition for $f$ to
be a pseudo-polynomial function.

\begin{proposition}
\label{prop necessary}If $f\colon\prod_{i\in\lbrack n]}X_{i}\rightarrow Y$ is
a pseudo-polynomial function, then it satisfies \textup{(\ref{eq BC})} and%
\begin{equation}
\Phi_{k}^{-}\leq\Phi_{k}^{+}\text{,\quad for all }k\in\left[  n\right]  .
\label{eq Fi_k-+}%
\end{equation}

\end{proposition}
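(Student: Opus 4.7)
The plan is to combine Theorem~\ref{thm pseudo-median decomposition} with Lemma~\ref{lemma med(a,x,b)=m} to read off pointwise bounds on the factors $\varphi_{k}$, and then aggregate these bounds over all tuples with $x_{k}=a_{k}$. The condition (\ref{eq BC}) is already observed to hold for any pseudo-polynomial function, so the real content is the inequality (\ref{eq Fi_k-+}).

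First, I would invoke Theorem~\ref{thm pseudo-median decomposition} to produce unary maps $\varphi_{1},\ldots,\varphi_{n}\colon X_{k}\rightarrow Y$ satisfying (\ref{eq BC1}) such that the pseudo-median decomposition (\ref{eq pseudo-median decomposition}) holds. Fix $k\in[n]$ and an arbitrary $\mathbf{x}\in\prod_{i\in[n]}X_{i}$. Applying Lemma~\ref{lemma med(a,x,b)=m} with $u=f(\mathbf{x}_{k}^{0})$, $w=f(\mathbf{x}_{k}^{1})$, $m=f(\mathbf{x})$ and $v=\varphi_{k}(x_{k})$ (note that $u\leq m\leq w$ holds by (\ref{eq BC})), the decomposition $\operatorname{med}(u,v,w)=m$ becomes the two-sided estimate
\[
f(\mathbf{x})\wedge\overline{f(\mathbf{x}_{k}^{0})}\leq\varphi_{k}(x_{k})\leq f(\mathbf{x})\vee\overline{f(\mathbf{x}_{k}^{1})}.
\]

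The key (though easy) step is to observe that the middle term $\varphi_{k}(x_{k})$ lies in $Y$, whereas the flanking expressions live a priori only in $\mathcal{P}(U)$. By definition of $\operatorname{cl}$ as the least element of $Y$ lying above a set, and of $\operatorname{int}$ as the greatest element of $Y$ lying below, the previous inequality sharpens to
\[
\operatorname{cl}\bigl(f(\mathbf{x})\wedge\overline{f(\mathbf{x}_{k}^{0})}\bigr)\leq\varphi_{k}(x_{k})\leq\operatorname{int}\bigl(f(\mathbf{x})\vee\overline{f(\mathbf{x}_{k}^{1})}\bigr).
\]

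Finally, for a fixed $a_{k}\in X_{k}$, the value $\varphi_{k}(a_{k})$ depends only on $a_{k}$; hence the left-hand inequality holds uniformly over all $\mathbf{x}$ with $x_{k}=a_{k}$, and taking the join over such $\mathbf{x}$ gives $\Phi_{k}^{-}(a_{k})\leq\varphi_{k}(a_{k})$. Dually, taking the meet of the right-hand side yields $\varphi_{k}(a_{k})\leq\Phi_{k}^{+}(a_{k})$. Chaining these two gives (\ref{eq Fi_k-+}). The only conceptual step worth highlighting is the passage from set-theoretic bounds in $\mathcal{P}(U)$ to bounds in $Y$ via $\operatorname{cl}$ and $\operatorname{int}$; everything else is bookkeeping.
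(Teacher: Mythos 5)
Your proposal is correct and is essentially identical to the paper's own argument: the paper likewise applies Lemma~\ref{lemma med(a,x,b)=m} to the pseudo-median decomposition with $u=f(\mathbf{x}_{k}^{0})$, $m=f(\mathbf{x})$, $w=f(\mathbf{x}_{k}^{1})$, $v=\varphi_{k}(x_{k})$, sharpens the resulting bounds via $\operatorname{cl}$ and $\operatorname{int}$ because $\varphi_{k}(x_{k})\in Y$, and then takes the join and meet over all $\mathbf{x}$ with $x_{k}=a_{k}$ to obtain $\Phi_{k}^{-}\leq\varphi_{k}\leq\Phi_{k}^{+}$. No gaps; your highlighted step (passing from bounds in $\mathcal{P}(U)$ to bounds in $Y$) is exactly the point the paper also makes.
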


\begin{proof}
Let us suppose that $f\left(  \mathbf{x}\right)  =p\left(  \boldsymbol{\varphi
}\left(  \mathbf{x}\right)  \right)  $ is a pseudo-polynomial function. Then
(\ref{eq pseudo-median decomposition}) holds by
Theorem~\ref{thm pseudo-median decomposition}, and applying
Lemma~\ref{lemma med(a,x,b)=m} with $u=f\left(  \mathbf{x}_{k}^{0}\right)
,m=f\left(  \mathbf{x}\right)  ,w=f\left(  \mathbf{x}_{k}^{1}\right)  $ and
$v=\varphi_{k}\left(  x_{k}\right)  $, we see that $f\left(  \mathbf{x}%
\right)  \wedge\overline{f\left(  \mathbf{x}_{k}^{0}\right)  }\leq\varphi
_{k}\left(  x_{k}\right)  \leq f\left(  \mathbf{x}\right)  \vee\overline
{f\left(  \mathbf{x}_{k}^{1}\right)  }$. Moreover, since $\varphi_{k}\left(
x_{k}\right)  \in Y$, we have%
\[
\operatorname{cl}\bigl(f\left(  \mathbf{x}\right)  \wedge\overline{f\left(
\mathbf{x}_{k}^{0}\right)  }\bigr)\leq\varphi_{k}\left(  x_{k}\right)
\leq\operatorname{int}\bigl(f\left(  \mathbf{x}\right)  \vee\overline{f\left(
\mathbf{x}_{k}^{1}\right)  }\bigr).
\]
Considering these inequalities for all $\mathbf{x\in}\prod_{i\in\lbrack
n]}X_{i}$ with a fixed $k$-th component $x_{k}=a_{k}$, it follows that
\begin{equation}
\Phi_{k}^{-}\left(  a_{k}\right)  \leq\varphi_{k}\left(  a_{k}\right)
\leq\Phi_{k}^{+}\left(  a_{k}\right)  \label{eq Fi-<fi<Fi+}%
\end{equation}
for all $k\in\left[  n\right]  ,a_{k}\in X_{k}$.
\end{proof}

\begin{remark}
\label{remark eq Fi_k-+ reformulated}Let us note that (\ref{eq Fi_k-+}) holds
if and only if each joinand in the definition of $\Phi_{k}^{-}\left(
a_{k}\right)  $ is less than or equal to each meetand in the definition of
$\Phi_{k}^{+}\left(  a_{k}\right)  $. In other words, (\ref{eq Fi_k-+}) is
equivalent to%
\[
\operatorname{cl}\bigl(f\left(  \mathbf{y}\right)  \wedge\overline{f\left(
\mathbf{y}_{k}^{0}\right)  }\bigr)\leq\operatorname{int}\bigl(f\left(
\mathbf{x}\right)  \vee\overline{f\left(  \mathbf{x}_{k}^{1}\right)
}\bigr)\text{ for all }\mathbf{x},\mathbf{y}\in\prod_{i\in\lbrack n]}X\text{
with }x_{k}=y_{k}.
\]

\end{remark}

In order to prove that the necessary condition presented in the above
proposition is also sufficient, we verify that (\ref{eq BC}) and
(\ref{eq Fi_k-+}) imply that $f$ is pseudo-median decomposable with respect to
$\Phi_{1}^{-},\ldots,\Phi_{n}^{-}$ and also with respect to $\Phi_{1}%
^{+},\ldots,\Phi_{n}^{+}$.

\begin{proposition}
\label{prop sufficient}Suppose that $f\colon\prod_{i\in\lbrack n]}%
X_{i}\rightarrow Y$ satisfies \textup{(\ref{eq BC})} and
\textup{(\ref{eq Fi_k-+})}. Then, for all $\mathbf{x}\in\prod_{i\in\lbrack
n]}X_{i}$ and $k\in\left[  n\right]  $, we have%
\[
f(\mathbf{x})=\operatorname{med}\big(f(\mathbf{x}_{k}^{0}),\Phi_{k}^{-}%
(x_{k}),f(\mathbf{x}_{k}^{1})\big)=\operatorname{med}\big(f(\mathbf{x}_{k}%
^{0}),\Phi_{k}^{+}(x_{k}),f(\mathbf{x}_{k}^{1})\big).
\]

\end{proposition}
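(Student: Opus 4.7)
The plan is to reduce the two claimed identities to the sandwich criterion of Lemma~\ref{lemma med(a,x,b)=m} and then verify the sandwich directly from the definitions of $\Phi_k^-$ and $\Phi_k^+$, using the hypothesis $\Phi_k^- \leq \Phi_k^+$ in its reformulated shape from Remark~\ref{remark eq Fi_k-+ reformulated}.

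First I would fix $\mathbf{x} \in \prod_{i\in[n]} X_i$ and $k \in [n]$, and abbreviate $u := f(\mathbf{x}_k^0)$, $m := f(\mathbf{x})$, $w := f(\mathbf{x}_k^1)$. The boundary condition (\ref{eq BC}) gives $u \leq m \leq w$, so the hypothesis of Lemma~\ref{lemma med(a,x,b)=m} is satisfied. It therefore suffices to prove the four inequalities
\[
m \wedge \overline{u} \leq \Phi_k^-(x_k) \leq m \vee \overline{w}, \qquad m \wedge \overline{u} \leq \Phi_k^+(x_k) \leq m \vee \overline{w}.
\]

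Two of these are essentially cost-free and come straight from the definitions by singling out $\mathbf{y} = \mathbf{x}$ as one term of the join (respectively meet). Indeed, $\operatorname{cl}(m \wedge \overline{u})$ appears as a joinand of $\Phi_k^-(x_k)$, so $m \wedge \overline{u} \leq \operatorname{cl}(m \wedge \overline{u}) \leq \Phi_k^-(x_k)$; dually, $\operatorname{int}(m \vee \overline{w})$ appears as a meetand of $\Phi_k^+(x_k)$, so $\Phi_k^+(x_k) \leq \operatorname{int}(m \vee \overline{w}) \leq m \vee \overline{w}$. This handles the lower bound on $\Phi_k^-(x_k)$ and the upper bound on $\Phi_k^+(x_k)$.

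The remaining two inequalities are where the hypothesis $\Phi_k^- \leq \Phi_k^+$ is used. Applying the reformulation in Remark~\ref{remark eq Fi_k-+ reformulated} with our fixed $\mathbf{x}$ in the role of the right tuple and letting $\mathbf{y}$ range over all tuples with $y_k = x_k$, each joinand $\operatorname{cl}(f(\mathbf{y}) \wedge \overline{f(\mathbf{y}_k^0)})$ of $\Phi_k^-(x_k)$ is bounded above by $\operatorname{int}(m \vee \overline{w})$; taking the join yields $\Phi_k^-(x_k) \leq \operatorname{int}(m \vee \overline{w}) \leq m \vee \overline{w}$. The remaining inequality $m \wedge \overline{u} \leq \Phi_k^+(x_k)$ then follows by transitivity from the already established $m \wedge \overline{u} \leq \Phi_k^-(x_k)$ together with $\Phi_k^- \leq \Phi_k^+$.

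I do not expect a serious obstacle here: the substantive content is packaged in Lemma~\ref{lemma med(a,x,b)=m} and in Remark~\ref{remark eq Fi_k-+ reformulated}, and the only thing to get right is the bookkeeping for the symmetry between $\Phi_k^-$ and $\Phi_k^+$, in particular noticing that the ``hard'' direction for $\Phi_k^-$ (its upper bound by $m \vee \overline{w}$) is exactly the reformulated hypothesis, while the analogous ``hard'' direction for $\Phi_k^+$ is then free by monotonicity.
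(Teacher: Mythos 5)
Your proposal is correct, and it takes a slightly but genuinely different route from the paper's. The paper also reduces everything to Lemma~\ref{lemma med(a,x,b)=m}, but it works at the level of medians: it first uses distributivity to push the join defining $\Phi_k^-(x_k)$ through the median, obtaining a join of medians, then bounds that join from below by the single joinand with $\mathbf{y}=\mathbf{x}$ and from above by replacing every joinand with $\operatorname{int}\bigl(f(\mathbf{x})\vee\overline{f(\mathbf{x}_k^1)}\bigr)$ (via Remark~\ref{remark eq Fi_k-+ reformulated}), and evaluates each of the two bounding medians to $f(\mathbf{x})$ by the lemma; the case of $\Phi_k^+$ is left to the reader. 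You instead apply the lemma only once per function, in its \textup{(ii)}$\implies$\textup{(i)} direction, after checking the sandwich $m\wedge\overline{u}\leq\Phi_k^{\pm}(x_k)\leq m\vee\overline{w}$ directly from the definitions: the easy bounds come from isolating the $\mathbf{y}=\mathbf{x}$ joinand (resp.\ meetand), the upper bound on $\Phi_k^-(x_k)$ is exactly the reformulated hypothesis, and the lower bound on $\Phi_k^+(x_k)$ follows by transitivity from $\Phi_k^-\leq\Phi_k^+$. This buys a shorter argument that avoids the ``joins distribute over medians'' step and treats $\Phi_k^-$ and $\Phi_k^+$ in one stroke. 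Two small points you should make explicit if you write it up: the lemma as stated requires the middle entry to lie in $Y$, so note that $\Phi_k^{\pm}(x_k)\in Y$ because $\operatorname{cl}$ and $\operatorname{int}$ take values in $Y$ and $Y$ is closed under the (finitely many distinct values of the) joins and meets involved; and the hypothesis $u\leq m\leq w$ of the lemma is supplied by \textup{(\ref{eq BC})}, as you indicated.
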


\begin{proof}
We prove that $f$ is pseudo-median decomposable with respect to $\Phi_{1}%
^{-},\ldots,\Phi_{n}^{-}$, and leave to the reader the analogous argument for
$\Phi_{1}^{+},\ldots,\Phi_{n}^{+}$. Let us fix $k\in\left[  n\right]
,a_{k}\in X_{k}$ and $\mathbf{x}\in\prod_{i\in\lbrack n]}X_{i}$ with
$x_{k}=a_{k}$. By the definition of $\Phi_{k}^{-}$, we have%
\[
\operatorname{med}\big(f(\mathbf{x}_{k}^{0}),\Phi_{k}^{-}(a_{k}),f(\mathbf{x}%
_{k}^{1})\big)=\operatorname{med}\Big(f(\mathbf{x}_{k}^{0}),\bigvee
_{y_{k}=a_{k}}\operatorname{cl}\bigl(f\left(  \mathbf{y}\right)
\wedge\overline{f\left(  \mathbf{y}_{k}^{0}\right)  }\bigr),f(\mathbf{x}%
_{k}^{1})\Big).
\]
From the distributivity of $Y$ it follows that joins distribute over medians,
and this gives the following expression for $\operatorname{med}%
\big(f(\mathbf{x}_{k}^{0}),\Phi_{k}^{-}(a_{k}),f(\mathbf{x}_{k}^{1})\big)$:%

\begin{equation}
\operatorname{med}\big(f(\mathbf{x}_{k}^{0}),\Phi_{k}^{-}(a_{k}),f(\mathbf{x}%
_{k}^{1})\big)=\bigvee_{y_{k}=a_{k}}\operatorname{med}%
\bigl(%
f(\mathbf{x}_{k}^{0}),\operatorname{cl}\bigl(f\left(  \mathbf{y}\right)
\wedge\overline{f\left(  \mathbf{y}_{k}^{0}\right)  }\bigr),f(\mathbf{x}%
_{k}^{1})%
\bigr)%
. \label{eq pseudo-median with Fi_k-}%
\end{equation}

We can estimate this join from below by keeping only the joinand corresponding
to $\mathbf{y}=\mathbf{x}$ (this indeed appears in the join, since
$x_{k}=a_{k}$):%
\begin{equation}
\operatorname{med}\big(f(\mathbf{x}_{k}^{0}),\Phi_{k}^{-}(a_{k}),f(\mathbf{x}%
_{k}^{1})\big)\geq\operatorname{med}\Big(f(\mathbf{x}_{k}^{0}%
),\operatorname{cl}\bigl(f\left(  \mathbf{x}\right)  \wedge\overline{f\left(
\mathbf{x}_{k}^{0}\right)  }\bigr),f(\mathbf{x}_{k}^{1})\Big).
\label{eq first lower estimate for pseudo-median with Fi_k-}%
\end{equation}
Applying Lemma~\ref{lemma med(a,x,b)=m} with $u=f(\mathbf{x}_{k}^{0})$,
$v=\operatorname{cl}\bigl(f\left(  \mathbf{x}\right)  \wedge\overline{f\left(
\mathbf{x}_{k}^{0}\right)  }\bigr)$, $w=f(\mathbf{x}_{k}^{1})$ and $m=f\left(
\mathbf{x}\right)  $ and taking into account that $f\left(  \mathbf{x}_{k}%
^{0}\right)  \leq f\left(  \mathbf{x}\right)  \leq f\left(  \mathbf{x}_{k}%
^{1}\right)  $ holds by (\ref{eq BC}), we see that the right hand side of
(\ref{eq first lower estimate for pseudo-median with Fi_k-}) equals $f\left(
\mathbf{x}\right)  $. This yields the inequality%
\begin{equation}
\operatorname{med}\big(f(\mathbf{x}_{k}^{0}),\Phi_{k}^{-}(a_{k}),f(\mathbf{x}%
_{k}^{1})\big)\geq f\left(  \mathbf{x}\right)  .
\label{eq final lower estimate for pseudo-median with Fi_k-}%
\end{equation}

In order to prove the converse inequality, let us note that property
(\ref{eq Fi_k-+}) implies%
\[
\operatorname{cl}\bigl(f\left(  \mathbf{y}\right)  \wedge\overline{f\left(
\mathbf{y}_{k}^{0}\right)  }\bigr)\leq\operatorname{int}\bigl(f\left(
\mathbf{x}\right)  \vee\overline{f\left(  \mathbf{x}_{k}^{1}\right)  }\bigr),
\]
whenever $y_{k}=a_{k}$ (see Remark~\ref{remark eq Fi_k-+ reformulated}). Thus,
replacing $\operatorname{cl}\bigl(f\left(  \mathbf{y}\right)  \wedge
\overline{f\left(  \mathbf{y}_{k}^{0}\right)  }\bigr)$ by $\operatorname{int}%
\bigl(f\left(  \mathbf{x}\right)  \vee\overline{f\left(  \mathbf{x}_{k}%
^{1}\right)  }\bigr)$ in each joinand on the right hand side of
(\ref{eq pseudo-median with Fi_k-}), we get the upper estimate
\begin{equation}
\operatorname{med}\big(f(\mathbf{x}_{k}^{0}),\Phi_{k}^{-}(a_{k}),f(\mathbf{x}%
_{k}^{1})\big)\leq\operatorname{med}%
\bigl(%
f(\mathbf{x}_{k}^{0}),\operatorname{int}\bigl(f\left(  \mathbf{x}\right)
\vee\overline{f\left(  \mathbf{x}_{k}^{1}\right)  }\bigr),f(\mathbf{x}_{k}%
^{1})%
\bigr)%
. \label{eq first upper estimate for pseudo-median with Fi_k-}%
\end{equation}
Again, Lemma~\ref{lemma med(a,x,b)=m} shows that the right hand side of
(\ref{eq first upper estimate for pseudo-median with Fi_k-}) equals $f\left(
\mathbf{x}\right)  $, hence we have%
\begin{equation}
\operatorname{med}\big(f(\mathbf{x}_{k}^{0}),\Phi_{k}^{-}(a_{k}),f(\mathbf{x}%
_{k}^{1})\big)\leq f\left(  \mathbf{x}\right)  .
\label{eq final upper estimate for pseudo-median with Fi_k-}%
\end{equation}

Combining inequalities
(\ref{eq final lower estimate for pseudo-median with Fi_k-}) and
(\ref{eq final upper estimate for pseudo-median with Fi_k-}), we get the
desired equality%
\[
\operatorname{med}\big(f(\mathbf{x}_{k}^{0}),\Phi_{k}^{-}(a_{k}),f(\mathbf{x}%
_{k}^{1})\big)=f\left(  \mathbf{x}\right)  .\qedhere
\]

\end{proof}

Propositions~\ref{prop necessary} and \ref{prop sufficient} together with
Theorem~\ref{thm pseudo-median decomposition} yield the following
characterization of pseudo-polynomial functions.

\begin{theorem}
\label{thm characterization}A function $f\colon\prod_{i\in\lbrack n]}%
X_{i}\rightarrow Y$ is a pseudo-polynomial function if and only if it
satisfies conditions \textup{(\ref{eq BC})} and \textup{(\ref{eq Fi_k-+})}.
\end{theorem}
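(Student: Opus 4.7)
The theorem is the culmination of the preceding development, so my plan is essentially one of assembly: the two propositions immediately preceding it already establish the two implications, with Theorem~\ref{thm pseudo-median decomposition} bridging pseudo-median decomposability and pseudo-polynomiality. I do not anticipate any genuine obstacle, since all of the substantive work has been done upstream; the only care needed is to cite the three ingredients in the correct logical order.

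For the necessity direction ($\Rightarrow$), I would simply invoke Proposition~\ref{prop necessary}. It already states that if $f$ is a pseudo-polynomial function, then $f$ satisfies (\ref{eq BC}) as well as (\ref{eq Fi_k-+}). In fact the former is noted as an immediate consequence of the definition of pseudo-polynomial function just after Proposition~\ref{prop:PseudoPol-Sugeno}, while (\ref{eq Fi_k-+}) is obtained in Proposition~\ref{prop necessary} by applying the pseudo-median decomposition of Theorem~\ref{thm pseudo-median decomposition} and the extraction lemma (Lemma~\ref{lemma med(a,x,b)=m}) to recover the pointwise sandwich $\Phi_{k}^{-}(a_{k})\leq\varphi_{k}(a_{k})\leq\Phi_{k}^{+}(a_{k})$. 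So in this direction there is nothing new to prove.

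For the sufficiency direction ($\Leftarrow$), I would assume that $f$ satisfies (\ref{eq BC}) and (\ref{eq Fi_k-+}), and apply Proposition~\ref{prop sufficient}. That proposition yields, for each $k\in[n]$ and each $\mathbf{x}\in\prod_{i\in[n]}X_{i}$, the identity
\[
f(\mathbf{x})=\operatorname{med}\big(f(\mathbf{x}_{k}^{0}),\Phi_{k}^{-}(x_{k}),f(\mathbf{x}_{k}^{1})\big),
\]
and, since $\Phi_{k}^{-}\colon X_{k}\to Y$ satisfies the boundary condition (\ref{eq BC1}) as noted just after (\ref{eq definition of  Fi_k- and Fi_k+}), this is exactly the defining equation (\ref{eq pseudo-median decomposition}) of pseudo-median decomposability with the auxiliary functions $\Phi_{k}^{-}$ playing the role of $\varphi_{k}$. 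Consequently, $f$ is pseudo-median decomposable, so by Theorem~\ref{thm pseudo-median decomposition} (equivalently, by the constructive Theorem~\ref{thm dnf for pseudo-Sugeno}) it is a pseudo-polynomial function. One could equally well use $\Phi_{k}^{+}$ here, which foreshadows the non-uniqueness of the factorizations to be explored in Section~\ref{sect factorization}.

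In short, the proof reduces to a two-line citation: Proposition~\ref{prop necessary} for necessity, and Proposition~\ref{prop sufficient} followed by Theorem~\ref{thm pseudo-median decomposition} for sufficiency. The only place where subtlety could creep in is verifying that $\Phi_{k}^{-}$ (or $\Phi_{k}^{+}$) indeed satisfies (\ref{eq BC1}) so that they qualify as admissible inner functions, but this is already recorded as an immediate consequence of (\ref{eq BC}) in the paragraph preceding Lemma~\ref{lemma med(a,x,b)=m}.
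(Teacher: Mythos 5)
Your proof is correct and follows exactly the route the paper takes: the theorem is stated there as an immediate consequence of Proposition~\ref{prop necessary} (necessity) and Proposition~\ref{prop sufficient} combined with Theorem~\ref{thm pseudo-median decomposition} (sufficiency). Your extra remark that $\Phi_{k}^{-}$ (or $\Phi_{k}^{+}$) satisfies (\ref{eq BC1}) is the same observation the paper records right after (\ref{eq definition of  Fi_k- and Fi_k+}), so nothing is missing.
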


\begin{remark}
Theorem~\ref{thm characterization} is of different nature than
Theorem~\ref{thm pseudo-median decomposition} and the various
characterizations obtained in \cite{CW3}: here the necessary and sufficient
condition for $f$ being a pseudo-polynomial function is given solely in terms
of $f$ itself, without referring to the existence of certain functions
$\varphi_{k}$.
\end{remark}

\section{Factorizations of Pseudo-polynomial
Functions\label{sect factorization}}

Let us suppose that $f\colon\prod_{i\in\lbrack n]}X_{i}\rightarrow Y$
satisfies (\ref{eq BC}) and (\ref{eq Fi_k-+}). According to
Theorem~\ref{thm characterization}, $f$ is a pseudo-polynomial function, i.e.,
it has a factorization of the form $f\left(  \mathbf{x}\right)  =p\left(
\boldsymbol{\varphi}\left(  \mathbf{x}\right)  \right)  $, where $p\colon
Y^{n}\rightarrow Y$ is a polynomial function and each $\varphi_{k}\colon
X_{k}\rightarrow Y\left(  k\in\left[  n\right]  \right)  $ is a unary map
satisfying (\ref{eq BC1}). We now show how to construct such a factorization;
in fact, we will find all possible factorizations. First we describe the set
of possible functions $\varphi_{k}$ (the polynomial function $p_{0}$ in the
statement of the theorem is the one defined by (\ref{eq p0 dnf}) in
Theorem~\ref{thm dnf for pseudo-Sugeno}).

\begin{theorem}
\label{thm factorization--fi}For any function $f\colon\prod_{i\in\lbrack
n]}X_{i}\rightarrow Y$ satisfying \textup{(\ref{eq BC})} and unary maps
$\varphi_{k}\colon X_{k}\rightarrow Y\left(  k\in\left[  n\right]  \right)  $
satisfying \textup{(\ref{eq BC1})}, the following three conditions are
equivalent:%
\renewcommand{\labelenumi}{\textup{(\roman{enumi})}}
\renewcommand{\theenumi}{\labelenumi}%

\begin{enumerate}
\item \label{item 1 in thm factorization--fi}$\Phi_{k}^{-}\leq\varphi_{k}%
\leq\Phi_{k}^{+}$ holds for all $k\in\left[  n\right]  $;

\item \label{item 2 in thm factorization--fi}$f\left(  \mathbf{x}\right)
=p_{0}\left(  \boldsymbol{\varphi}\left(  \mathbf{x}\right)  \right)  $;

\item \label{item 3 in thm factorization--fi}there exists a polynomial
function $p\colon Y^{n}\rightarrow Y$ such that $f\left(  \mathbf{x}\right)
=p\left(  \boldsymbol{\varphi}\left(  \mathbf{x}\right)  \right)  $.
\end{enumerate}
\end{theorem}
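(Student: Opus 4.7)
The plan is to establish the cyclic chain of implications \ref{item 3 in thm factorization--fi}$\Rightarrow$\ref{item 1 in thm factorization--fi}$\Rightarrow$\ref{item 2 in thm factorization--fi}$\Rightarrow$\ref{item 3 in thm factorization--fi}, leaning heavily on Propositions~\ref{prop necessary} and \ref{prop sufficient} and Theorem~\ref{thm dnf for pseudo-Sugeno}. The implication \ref{item 2 in thm factorization--fi}$\Rightarrow$\ref{item 3 in thm factorization--fi} is trivial, taking $p:=p_{0}$.

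For \ref{item 3 in thm factorization--fi}$\Rightarrow$\ref{item 1 in thm factorization--fi}, I would observe that the argument in Proposition~\ref{prop necessary} does not depend on the specific polynomial function $p$: from any factorization $f=p\circ\boldsymbol{\varphi}$ it extracts, via Theorem~\ref{thm pseudo-median decomposition} and Lemma~\ref{lemma med(a,x,b)=m}, the inequalities (\ref{eq Fi-<fi<Fi+}), i.e.\ $\Phi_{k}^{-}\leq\varphi_{k}\leq\Phi_{k}^{+}$. So this direction is obtained by simply rereading that proof.

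For the substantive implication \ref{item 1 in thm factorization--fi}$\Rightarrow$\ref{item 2 in thm factorization--fi}, assume $\Phi_{k}^{-}\leq\varphi_{k}\leq\Phi_{k}^{+}$ for every $k\in[n]$. Since each $\varphi_{k}\left(a_{k}\right)$ is an element of $Y$ sandwiched between $\Phi_{k}^{-}\left(a_{k}\right)$ and $\Phi_{k}^{+}\left(a_{k}\right)$, transitivity gives (\ref{eq Fi_k-+}). Therefore Proposition~\ref{prop sufficient} applies and yields, for every $\mathbf{x}$ and $k$,
\[
\operatorname{med}\bigl(f(\mathbf{x}_{k}^{0}),\Phi_{k}^{-}(x_{k}),f(\mathbf{x}_{k}^{1})\bigr)=f(\mathbf{x})=\operatorname{med}\bigl(f(\mathbf{x}_{k}^{0}),\Phi_{k}^{+}(x_{k}),f(\mathbf{x}_{k}^{1})\bigr).
\]
Using the monotonicity of $\operatorname{med}$ in its middle argument together with $\Phi_{k}^{-}(x_{k})\leq\varphi_{k}(x_{k})\leq\Phi_{k}^{+}(x_{k})$, the expression $\operatorname{med}\bigl(f(\mathbf{x}_{k}^{0}),\varphi_{k}(x_{k}),f(\mathbf{x}_{k}^{1})\bigr)$ is squeezed between two values that both equal $f(\mathbf{x})$, so it equals $f(\mathbf{x})$ itself. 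Thus $f$ is pseudo-median decomposable with respect to $\varphi_{1},\ldots,\varphi_{n}$, and Theorem~\ref{thm dnf for pseudo-Sugeno} delivers exactly $f(\mathbf{x})=p_{0}\bigl(\boldsymbol{\varphi}(\mathbf{x})\bigr)$, which is \ref{item 2 in thm factorization--fi}.

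There is no real obstacle in this plan, as all of the heavy machinery is packaged in the earlier results. The only conceptual point worth emphasizing is the sandwiching step: it is what allows the two identities supplied by Proposition~\ref{prop sufficient} (one for $\Phi_{k}^{-}$ and one for $\Phi_{k}^{+}$) to collapse into a single pseudo-median identity for the arbitrary intermediate choice $\varphi_{k}$, which is precisely what is needed to feed Theorem~\ref{thm dnf for pseudo-Sugeno}.
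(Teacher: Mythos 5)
Your proposal is correct and follows essentially the same route as the paper: (ii)$\Rightarrow$(iii) trivially, (iii)$\Rightarrow$(i) by rereading the proof of Proposition~\ref{prop necessary} (equation (\ref{eq Fi-<fi<Fi+})), and (i)$\Rightarrow$(ii) by the same sandwiching argument via Proposition~\ref{prop sufficient}, monotonicity of the median, and Theorem~\ref{thm dnf for pseudo-Sugeno}. No gaps.
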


\begin{proof}
The implication \ref{item 2 in thm factorization--fi}$\implies$%
\ref{item 3 in thm factorization--fi} is trivial, and
\ref{item 3 in thm factorization--fi}$\implies$%
\ref{item 1 in thm factorization--fi} has been established in the course of
the proof of Proposition~\ref{prop necessary} (see equation
(\ref{eq Fi-<fi<Fi+})).

So suppose that \ref{item 1 in thm factorization--fi} holds. Then obviously
(\ref{eq Fi_k-+}) holds as well, and Proposition~\ref{prop sufficient} shows
that $f$ is pseudo-median decomposable with respect to $\Phi_{1}^{-}%
,\ldots,\Phi_{n}^{-}$ and also with respect to $\Phi_{1}^{+},\ldots,\Phi
_{n}^{+}$. Since $\Phi_{k}^{-}\leq\varphi_{k}\leq\Phi_{k}^{+}$ holds for all
$k\in\left[  n\right]  $ by \ref{item 1 in thm factorization--fi}, we have
\begin{align*}
f\left(  \mathbf{x}\right)   &  =\operatorname{med}\big(f(\mathbf{x}_{k}%
^{0}),\Phi_{k}^{-}(x_{k}),f(\mathbf{x}_{k}^{1})\big)\\
&  \leq\operatorname{med}\big(f(\mathbf{x}_{k}^{0}),\varphi_{k}(x_{k}%
),f(\mathbf{x}_{k}^{1})\big)\\
&  \leq\operatorname{med}\big(f(\mathbf{x}_{k}^{0}),\Phi_{k}^{+}%
(x_{k}),f(\mathbf{x}_{k}^{1})\big)=f\left(  \mathbf{x}\right)  ,
\end{align*}
therefore $f$ is pseudo-median decomposable with respect to $\varphi
_{1},\ldots,\varphi_{n}$. Now \ref{item 2 in thm factorization--fi} follows
immediately from Theorem~\ref{thm dnf for pseudo-Sugeno}.
\end{proof}

Theorem~\ref{thm factorization--fi} describes all those unary maps
$\varphi_{1},\ldots,\varphi_{n}$ that can occur in a factorization of $f$, but
it does not provide all possible polynomial functions $p$. (We know that
$p_{0}$ can be used in any factorization, but there may be others as well.) To
find all factorizations (\ref{eq:generalPol}) of $f$, let us fix unary
functions $\varphi_{k}\colon X_{k}\rightarrow Y\left(  k\in\left[  n\right]
\right)  $ satisfying (\ref{eq BC1}), such that $\Phi_{k}^{-}\leq\varphi
_{k}\leq\Phi_{k}^{+}$ for each $k\in\left[  n\right]  $. To simplify notation,
let $a_{k}=\varphi_{k}\left(  0_{X_{k}}\right)  ,b_{k}=\varphi_{k}\left(
1_{X_{k}}\right)  $, and for each $I\subseteq\left[  n\right]  $ let
$\mathbf{e}_{I}\in Y^{n}$ be the $n$-tuple whose $i$-th component is $a_{i}$
if $i\notin I$ and $b_{i}$ if $i\in I$. If $p\colon Y^{n}\rightarrow Y$ is a
polynomial function such that $f\left(  \mathbf{x}\right)  =p\left(
\boldsymbol{\varphi}\left(  \mathbf{x}\right)  \right)  $, then%
\begin{equation}
p\left(  \mathbf{e}_{I}\right)  =f\bigl(\widehat{\mathbf{1}}_{I}%
\bigr)\text{\quad for all }I\subseteq\left[  n\right]  \text{,}
\label{eq interpol}%
\end{equation}
since $\mathbf{e}_{I}=\boldsymbol{\varphi}\bigl(\widehat{\mathbf{1}}%
_{I}\bigr)$. We show that (\ref{eq interpol}) is not only necessary but also
sufficient to establish the factorization $f\left(  \mathbf{x}\right)
=p\left(  \boldsymbol{\varphi}\left(  \mathbf{x}\right)  \right)  $.

\begin{lemma}
\label{lemma interpol}Let $f\colon\prod_{i\in\lbrack n]}X_{i}\rightarrow Y$ be
a function satisfying $(\ref{eq BC})$ and $(\ref{eq Fi_k-+})$, and let
$\varphi_{k}\colon X_{k}\rightarrow Y\left(  k\in\left[  n\right]  \right)  $
be maps satisfying $(\ref{eq BC1})$, such that $\Phi_{k}^{-}\leq\varphi
_{k}\leq\Phi_{k}^{+}$ for all $k\in\left[  n\right]  $. Then a polynomial
function $p\colon Y^{n}\rightarrow Y$ yields a factorization $f\left(
\mathbf{x}\right)  =p\left(  \boldsymbol{\varphi}\left(  \mathbf{x}\right)
\right)  $ if and only if $(\ref{eq interpol})$ holds.
\end{lemma}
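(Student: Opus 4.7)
The necessity direction is essentially recorded in the paragraph preceding the statement of the lemma: evaluating $f(\mathbf{x})=p(\boldsymbol{\varphi}(\mathbf{x}))$ at $\mathbf{x}=\widehat{\mathbf{1}}_{I}$ gives $f(\widehat{\mathbf{1}}_{I})=p(\boldsymbol{\varphi}(\widehat{\mathbf{1}}_{I}))=p(\mathbf{e}_{I})$, so (\ref{eq interpol}) holds. Thus all the work is in showing that the interpolation conditions $p(\mathbf{e}_{I})=f(\widehat{\mathbf{1}}_{I})$ suffice to force $f=p\circ\boldsymbol{\varphi}$.

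My plan for the sufficiency direction is to introduce the auxiliary function $g\colon\prod_{i\in[n]}X_{i}\to Y$ defined by $g(\mathbf{x}):=p(\boldsymbol{\varphi}(\mathbf{x}))$ and to show $g=f$ by producing a common disjunctive normal form. Since $g$ is a pseudo-polynomial function by construction (its factorization $p\circ\boldsymbol{\varphi}$ is given), Theorem~\ref{thm pseudo-median decomposition} applies to $g$. Crucially, a quick inspection of that theorem's proof shows that the resulting pseudo-median decomposition uses precisely the $\varphi_{k}$ appearing in the given factorization of $g$ --- no other unary maps are introduced. Hence $g$ is pseudo-median decomposable with respect to the very same $\varphi_{1},\ldots,\varphi_{n}$ that govern $f$.

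With this in hand, Theorem~\ref{thm dnf for pseudo-Sugeno} yields $g(\mathbf{x})=p_{0}^{g}(\boldsymbol{\varphi}(\mathbf{x}))$, where the DNF polynomial $p_{0}^{g}$ has coefficients $g(\widehat{\mathbf{1}}_{I})$. Now the key computation: since $\boldsymbol{\varphi}(\widehat{\mathbf{1}}_{I})=\mathbf{e}_{I}$, we have $g(\widehat{\mathbf{1}}_{I})=p(\mathbf{e}_{I})$, and by the standing hypothesis (\ref{eq interpol}) this equals $f(\widehat{\mathbf{1}}_{I})$. Consequently $p_{0}^{g}$ coincides coefficient-for-coefficient with the polynomial $p_{0}$ from (\ref{eq p0 dnf}). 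On the other hand, because the assumptions on $\varphi_{k}$ place us exactly in the hypotheses of Theorem~\ref{thm factorization--fi}, that theorem gives $f(\mathbf{x})=p_{0}(\boldsymbol{\varphi}(\mathbf{x}))$. Chaining the two identities produces $g=p_{0}^{g}\circ\boldsymbol{\varphi}=p_{0}\circ\boldsymbol{\varphi}=f$, which is the desired factorization.

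The plan sidesteps any direct interpolation argument over the sub-box $\prod_{k}[a_{k},b_{k}]\subseteq Y^{n}$ (which is what the statement is morally about) by routing everything through the DNF machinery already developed. The only subtle point --- the potential obstacle --- is the assertion that a pseudo-polynomial function is pseudo-median decomposable with respect to \emph{whatever} $\varphi_{k}$ one started with, rather than some other tuple of unary maps; this, however, is immediate from the structure of the proof of Theorem~\ref{thm pseudo-median decomposition}, where the specific $\varphi_{k}$ in the hypothesis are retained throughout the median computation.
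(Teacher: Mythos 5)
Your proposal is correct and follows essentially the same route as the paper: the paper also defines $f'(\mathbf{x})=p(\boldsymbol{\varphi}(\mathbf{x}))$, invokes Theorem~\ref{thm pseudo-median decomposition} (whose proof indeed yields decomposability with respect to the given $\varphi_k$), applies Theorem~\ref{thm dnf for pseudo-Sugeno} to get the DNF of $f'$, uses Theorem~\ref{thm factorization--fi} to write $f=p_0\circ\boldsymbol{\varphi}$, and concludes by matching coefficients via (\ref{eq interpol}). Your explicit justification of the ``same $\varphi_k$'' point is a minor elaboration of what the paper uses implicitly.
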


\begin{proof}
As noted above, necessity is trivial.\ To prove the sufficiency, let us assume
that $p$ satisfies (\ref{eq interpol}), and let us define a function
$f^{\prime}\colon\prod_{i\in\lbrack n]}X_{i}\rightarrow Y$ by $f^{\prime
}\left(  \mathbf{x}\right)  =p\left(  \boldsymbol{\varphi}\left(
\mathbf{x}\right)  \right)  $. Then $f^{\prime}$ is a pseudo-polynomial
function, and by Theorem~\ref{thm pseudo-median decomposition} it is
pseudo-median decomposable with respect to $\varphi_{1},\ldots,\varphi_{n}$.
From Theorem~\ref{thm dnf for pseudo-Sugeno} we get the following expression
for $f^{\prime}$:%
\begin{equation}
f^{\prime}\left(  \mathbf{x}\right)  =\bigvee\limits_{I\subseteq\left[
n\right]  }\bigl(f^{\prime}\bigl(\widehat{\mathbf{1}}_{I}\bigr)\wedge
\bigwedge\limits_{i\in I}\varphi_{i}\left(  x_{i}\right)  \bigr).
\label{eq DNF for f' in lemma interpol}%
\end{equation}

The assumptions on $f$ and $\varphi_{k}$ guarantee that $f\left(
\mathbf{x}\right)  =p_{0}\left(  \boldsymbol{\varphi}\left(  \mathbf{x}%
\right)  \right)  $ by Theorem~\ref{thm factorization--fi}, and hence $f$ can
be written as%
\begin{equation}
f\left(  \mathbf{x}\right)  =\bigvee\limits_{I\subseteq\left[  n\right]
}\bigl(f\bigl(\widehat{\mathbf{1}}_{I}\bigr)\wedge\bigwedge\limits_{i\in
I}\varphi_{i}\left(  x_{i}\right)  \bigr).
\label{eq DNF for f in lemma interpol}%
\end{equation}

Since $f^{\prime}\bigl(\widehat{\mathbf{1}}_{I}%
\bigr)=p\bigl(\boldsymbol{\varphi}\bigl(\widehat{\mathbf{1}}_{I}%
\bigr)\bigr)=p\left(  \mathbf{e}_{I}\right)  $ and $p\left(  \mathbf{e}%
_{I}\right)  =f\bigl(\widehat{\mathbf{1}}_{I}\bigr)$ by (\ref{eq interpol}),
it follows from (\ref{eq DNF for f' in lemma interpol}) and
(\ref{eq DNF for f in lemma interpol}) that $f^{\prime}\left(  \mathbf{x}%
\right)  =f\left(  \mathbf{x}\right)  $.
\end{proof}

For a given $f$ and given $a_{k},b_{k}\in Y$, (\ref{eq interpol}) gives rise
to a polynomial interpolation problem over $Y$: the values of the unknown
polynomial function $p$ are prescribed at certain ($2^{n}$ many) points in
$Y^{n}$. It has been shown in \cite{CWgoodstein} that the least solution of
this interpolation problem is%
\[
p^{-}\left(  \mathbf{y}\right)  =\bigvee_{I\subseteq\left[  n\right]
}\bigl(c_{I}^{-}\wedge\bigwedge_{i\in I}y_{i}\bigr)\text{,\quad where }%
c_{I}^{-}=\operatorname{cl}\bigl(f\bigl(\widehat{\mathbf{1}}_{I}%
\bigr)\wedge\bigwedge_{i\notin I}\overline{a_{i}}\bigr),
\]
whereas the greatest solution is
\[
p^{+}\left(  \mathbf{y}\right)  =\bigvee_{I\subseteq\left[  n\right]
}\bigl(c_{I}^{+}\wedge\bigwedge_{i\in I}y_{i}\bigr)\text{,\quad where }%
c_{I}^{+}=\operatorname{int}\bigl(f\bigl(\widehat{\mathbf{1}}_{I}%
\bigr)\vee\bigvee_{i\in I}\overline{b_{i}}\bigr).
\]
In other words, a polynomial function $p$ is a solution of (\ref{eq interpol})
if and only if $p^{-}\leq p\leq p^{+}$. Since, by Theorem~\ref{prop:DNF(f)},
$p$ is uniquely determined by its values on the tuples $\mathbf{1}_{I}\left(
I\subseteq\left[  n\right]  \right)  $, this is equivalent to%
\[
c_{I}^{-}=p^{-}\left(  \mathbf{1}_{I}\right)  \leq p\left(  \mathbf{1}%
_{I}\right)  \leq p^{+}\left(  \mathbf{1}_{I}\right)  =c_{I}^{+}\text{\quad
for all }I\subseteq\left[  n\right]  .
\]
Thus we obtain the following description of all possible factorizations of a
given pseudo-polynomial function $f$.

\begin{theorem}
\label{thm factrization--fi and p}Let $f\colon\prod_{i\in\lbrack n]}%
X_{i}\rightarrow Y$ be a function satisfying \textup{(\ref{eq BC})}, for each
$k\in\lbrack n]$ let $\varphi_{k}\colon X_{k}\rightarrow Y$ be a given
function satisfying \textup{(\ref{eq BC1})}, and let $p\colon Y^{n}\rightarrow
Y$ be a polynomial function. Then $f\left(  \mathbf{x}\right)  =p\left(
\boldsymbol{\varphi}\left(  \mathbf{x}\right)  \right)  $ if and only if
$\Phi_{k}^{-}\leq\varphi_{k}\leq\Phi_{k}^{+}$ for each $k\in\left[  n\right]
$, and we have $p^{-}\leq p\leq p^{+}$.
\end{theorem}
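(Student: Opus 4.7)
The plan is to assemble this theorem by combining Theorem~\ref{thm factorization--fi}, Lemma~\ref{lemma interpol}, and the cited polynomial interpolation result from \cite{CWgoodstein}, which together do essentially all the work.

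For the forward direction, I would assume $f(\mathbf{x}) = p(\boldsymbol{\varphi}(\mathbf{x}))$ and split the conclusion into its two parts. The conditions $\Phi_k^- \leq \varphi_k \leq \Phi_k^+$ follow immediately from the implication \ref{item 3 in thm factorization--fi}$\implies$\ref{item 1 in thm factorization--fi} of Theorem~\ref{thm factorization--fi} (which is exactly what was established in the proof of Proposition~\ref{prop necessary} via Lemma~\ref{lemma med(a,x,b)=m}). For the bounds $p^- \leq p \leq p^+$, I would observe that applying $p$ to $\mathbf{e}_I = \boldsymbol{\varphi}\bigl(\widehat{\mathbf{1}}_I\bigr)$ yields $p(\mathbf{e}_I) = f\bigl(\widehat{\mathbf{1}}_I\bigr)$, so $p$ is a solution of the interpolation problem \eqref{eq interpol}. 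The result cited from \cite{CWgoodstein} then tells us directly that every solution of this interpolation problem lies between the explicit extremal solutions $p^-$ and $p^+$.

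For the backward direction, I would start from $\Phi_k^- \leq \varphi_k \leq \Phi_k^+$ and $p^- \leq p \leq p^+$, and run these implications in reverse. From $p^- \leq p \leq p^+$, the characterization in \cite{CWgoodstein} (via Theorem~\ref{prop:DNF(f)}, since a polynomial on $Y^n$ is determined by its values at the characteristic vectors $\mathbf{1}_I$) gives that $p(\mathbf{e}_I) = f\bigl(\widehat{\mathbf{1}}_I\bigr)$ for all $I \subseteq [n]$, i.e., \eqref{eq interpol} holds. At this point Lemma~\ref{lemma interpol} applies, since its hypotheses — $(\ref{eq BC})$, $(\ref{eq BC1})$, and $\Phi_k^- \leq \varphi_k \leq \Phi_k^+$ (the latter implying $(\ref{eq Fi_k-+})$ trivially) — are all in place, and it yields $f(\mathbf{x}) = p(\boldsymbol{\varphi}(\mathbf{x}))$.

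There is no real obstacle here: the theorem is a bookkeeping statement that packages Theorem~\ref{thm factorization--fi} (which handles the $\varphi_k$-parameters) together with Lemma~\ref{lemma interpol} and the external interpolation result (which handle the $p$-parameter) into a single biconditional. The only point requiring a moment's care is the implicit check that $\Phi_k^- \leq \varphi_k \leq \Phi_k^+$ in the backward direction supplies both $(\ref{eq Fi_k-+})$ and the hypothesis of Lemma~\ref{lemma interpol}, so that the interpolation equivalence from \cite{CWgoodstein} can be invoked in the form needed.
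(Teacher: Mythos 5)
Your proposal is correct and follows essentially the same route as the paper, which obtains this theorem precisely by combining Theorem~\ref{thm factorization--fi} (for the conditions on the $\varphi_k$), Lemma~\ref{lemma interpol} (reducing the factorization with fixed $\varphi_k$ to the interpolation condition (\ref{eq interpol})), and the result of \cite{CWgoodstein} that the solutions of (\ref{eq interpol}) are exactly the polynomial functions between $p^{-}$ and $p^{+}$. Your remark that $\Phi_{k}^{-}\leq\varphi_{k}\leq\Phi_{k}^{+}$ supplies (\ref{eq Fi_k-+}) in the backward direction is exactly the implicit bookkeeping the paper relies on.
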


\begin{remark}
Note that the polynomial functions $p^{-}$ and $p^{+}$ are defined in terms of
the maps $\varphi_{k}$, hence we have to choose these maps first, and only
then can we determine $p^{-}$ and $p^{+}$ (cf. the example in
Section~\ref{sect example}).
\end{remark}

\begin{remark}
Clearly, $c_{I}^{-}\leq f\bigl(\widehat{\mathbf{1}}_{I}\bigr)\leq c_{I}^{+}$
holds independently of $a_{k},b_{k}$, hence the polynomial function $p_{0}$
can be used in any factorization of $f$, as it was already shown in
Theorem~\ref{thm dnf for pseudo-Sugeno}.
\end{remark}

\begin{remark}
If $X_{k}$ is a partially ordered set for each $k\in\left[  n\right]  $ and
$f$ is order-preserving, then $\Phi_{k}^{-}$ and $\Phi_{k}^{+}$ are also
order-preserving. This shows that every order-preserving pseudo-polynomial
function has a factorization where each $\varphi_{k}$ is order-preserving.
Consequently, order-preserving pseudo-Sugeno integrals coincide with Sugeno
utility functions (cf. Corollary~4.2 in \cite{CW3}).
\end{remark}

\section{An Example\label{sect example}}

We illustrate the results of the previous section with a simple example, where
preferences about travelling with four airlines $\mathrm{A}_{1},\mathrm{A}%
_{2},\mathrm{A}_{3},\mathrm{A}_{4}$ in economy class ($\mathrm{E}$) and first
class ($\mathrm{F}$) are modelled by pseudo-polynomial functions.
Table~\ref{table example}(a) shows a fictitious customer's evaluation of these
eight options, where $\mathrm{B}$,$\mathrm{N,G,V}$ stand for \textquotedblleft
bad\textquotedblright, \textquotedblleft neutral\textquotedblright,
\textquotedblleft good\textquotedblright\ and \textquotedblleft very
good\textquotedblright, respectively, and $\mathrm{D}$ means \textquotedblleft
don't know\textquotedblright. This table defines an overall preference
function $f\colon X_{1}\times X_{2}\rightarrow Y$, where%
\[
X_{1}:=\left\{  \mathrm{A}_{1},\mathrm{A}_{2},\mathrm{A}_{3},\mathrm{A}%
_{4}\right\}  ,~X_{2}:=\left\{  \mathrm{E},\mathrm{F}\right\}  ,~Y:=\left\{
\mathrm{B},\mathrm{N},\mathrm{G},\mathrm{V},\mathrm{D}\right\}  .
\]

\begin{table}[ptb]
\caption{The airline example}%
\label{table example}%
\centering\renewcommand{\arraystretch}{1.2}
\begin{tabular}
[t]{cl}%
\begin{tabular}
[t]{c}%
(a) The function $f\medskip$\\%
\begin{tabular}
[c]{cc|c}%
$x_{1}$ & $x_{2}$ & $f\left(  x_{1},x_{2}\right)  $\\\hline
$\mathrm{A}_{1}$ & $\mathrm{E}$ & $\mathrm{B}$\\
$\mathrm{A}_{1}$ & $\mathrm{F}$ & $\mathrm{B}$\\
$\mathrm{A}_{2}$ & $\mathrm{E}$ & $\mathrm{B}$\\
$\mathrm{A}_{2}$ & $\mathrm{F}$ & $\mathrm{D}$\\
$\mathrm{A}_{3}$ & $\mathrm{E}$ & $\mathrm{N}$\\
$\mathrm{A}_{3}$ & $\mathrm{F}$ & $\mathrm{G}$\\
$\mathrm{A}_{4}$ & $\mathrm{E}$ & $\mathrm{N}$\\
$\mathrm{A}_{4}$ & $\mathrm{F}$ & $\mathrm{V}$%
\end{tabular}
\end{tabular}
$\ \qquad$ &
\begin{tabular}
[t]{c}%
(b) The functions $\Phi_{k}^{-},\Phi_{k}^{+}\medskip$\\%
\begin{tabular}
[t]{p{0.75cm}|cc}%
$x_{1}$ & $\Phi_{1}^{-}\left(  x_{1}\right)  $ & $\Phi_{1}^{+}\left(
x_{1}\right)  $\\\hline
$\mathrm{A}_{1}$ & $\mathrm{B}$ & $\mathrm{B}$\\
$\mathrm{A}_{2}$ & $\mathrm{D}$ & $\mathrm{D}$\\
$\mathrm{A}_{3}$ & $\mathrm{G}$ & $\mathrm{G}$\\
$\mathrm{A}_{4}$ & $\mathrm{V}$ & $\mathrm{V}$%
\end{tabular}
\\
\\%
\begin{tabular}
[t]{p{0.75cm}|cc}%
$x_{2}$ & $\Phi_{2}^{-}\left(  x_{2}\right)  $ & $\Phi_{2}^{+}\left(
x_{2}\right)  $\\\hline
$\mathrm{E}$ & $\mathrm{B}$ & $\mathrm{N}$\\
$\mathrm{F}$ & $\mathrm{V}$ & $\mathrm{V}$%
\end{tabular}
\end{tabular}
\end{tabular}
\end{table}

It is plausible that $\mathrm{D}$ is better than $\mathrm{B,}$ worse than
$\mathrm{G}$, and incomparable with $\mathrm{N}$, hence the ordering of $Y$ is
the one given in Figure~\ref{fig lattice Y}. This is a distributive lattice
that can be embedded into the power set of a three-element set $U$ as shown in
Figure~\ref{fig lattice Y}. The figure does not indicate the representation of
each element of $Y$ as a subset of $U$, only the complements of the elements,
since this is all we need in order to perform the computations that follow.
(Note that $\mathrm{B}$ and $\mathrm{V}$ are the complements of each other.)%

\begin{figure}
[ptb]
\begin{center}
\includegraphics[
height=1.6708in,
width=3.3797in
]%
{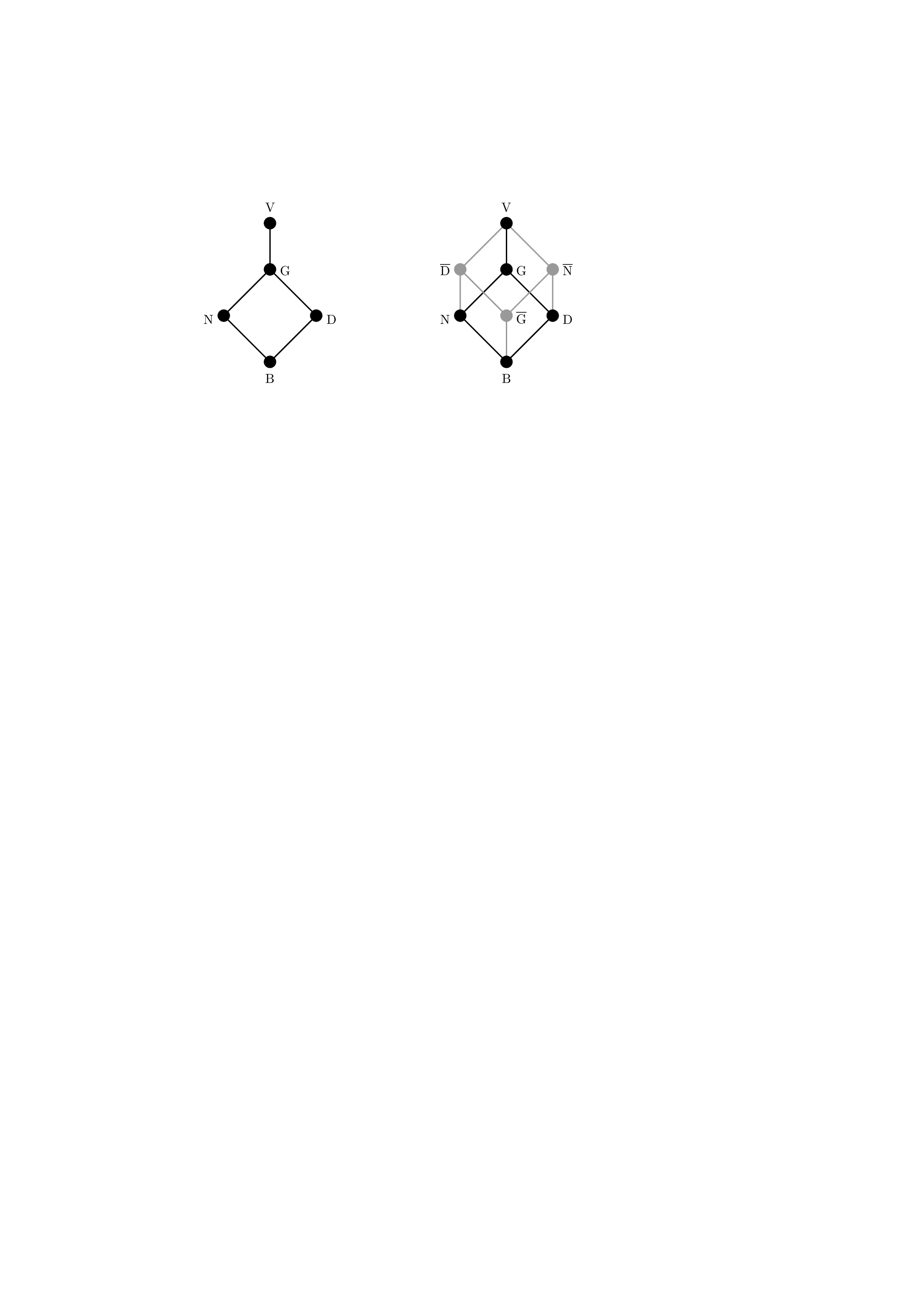}%
\caption{The lattice $Y$ and its embedding into a power set}%
\label{fig lattice Y}%
\end{center}
\end{figure}

For $y\in Y$ we have obviously $\operatorname{cl}\left(  y\right)
=\operatorname{int}\left(  y\right)  =y$, and for the three \textquotedblleft
extra\textquotedblright\ elements the closures and interiors can be read
easily from Figure~\ref{fig lattice Y}:%
\[%
\begin{tabular}
[c]{lll}%
\hphantom{$\operatorname{int}$}$\operatorname{cl}\left(  \overline{\mathrm{D}%
}\right)  =\mathrm{V},$ & \hphantom{$\operatorname{int}$}$\operatorname{cl}%
\left(  \overline{\mathrm{N}}\right)  =\mathrm{V},$ &
\hphantom{$\operatorname{int}$}$\operatorname{cl}\left(  \overline{\mathrm{G}%
}\right)  =\mathrm{V},$\\
\hphantom{$\operatorname{cl}$}$\operatorname{int}\left(  \overline{\mathrm{D}%
}\right)  =\mathrm{N},$ & \hphantom{$\operatorname{cl}$}$\operatorname{int}%
\left(  \overline{\mathrm{N}}\right)  =\mathrm{D},$ &
\hphantom{$\operatorname{cl}$}$\operatorname{int}\left(  \overline{\mathrm{G}%
}\right)  =\mathrm{B}.$%
\end{tabular}
\ \ \ \ \
\]

It is obvious that $0_{X_{2}}=\mathrm{E}$ and $1_{X_{2}}=\mathrm{F}$, but
$0_{X_{1}}$ and $1_{X_{1}}$ are not clear. However, from
Table~\ref{table example}(a) we can infer that $0_{X_{1}}=\mathrm{A}_{1}$ and
$1_{X_{1}}=\mathrm{A}_{4}$ if $f$ satisfies (\ref{eq BC}) at all. (If not,
then $f$ is not a pseudo-polynomial function.)

Table~\ref{table example}(b) shows the auxiliary functions $\Phi_{k}^{-}%
,\Phi_{k}^{+}$ corresponding to the function $f$. We give the details of the
computation of $\Phi_{2}^{+}\left(  \mathrm{E}\right)  $, the other values can
be calculated similarly:%
\begin{align*}
\Phi_{2}^{+}\left(  \mathrm{E}\right)   &  =%
{\displaystyle\bigwedge\limits_{x_{1}\in X_{1}}}
\operatorname{int}%
\bigl(%
f\left(  x_{1},\mathrm{E}\right)  \vee\overline{f(x_{1},\mathrm{F})}%
\bigr)%
\\
&  =\operatorname{int}(\mathrm{B}\vee\overline{\mathrm{B}})\wedge
\operatorname{int}(\mathrm{B}\vee\overline{\mathrm{D}})\wedge
\operatorname{int}(\mathrm{N}\vee\overline{\mathrm{G}})\wedge
\operatorname{int}(\mathrm{N}\vee\overline{\mathrm{V}})\\
&  =\operatorname{int}(\mathrm{V})\wedge\operatorname{int}(\overline
{\mathrm{D}})\wedge\operatorname{int}(\overline{\mathrm{D}})\wedge
\operatorname{int}(\mathrm{N})\\
&  =\mathrm{V}\wedge\mathrm{N}\wedge\mathrm{N}\wedge\mathrm{N}=\mathrm{N}.
\end{align*}

We can see that $\Phi_{k}^{-}\leq\Phi_{k}^{+}$ for $k=1,2$, therefore $f$ is a
pseudo-polynomial function by Theorem~\ref{thm characterization}.
Theorem~\ref{thm factorization--fi} implies that in any factorization
$f\left(  x_{1},x_{2}\right)  =p\left(  \varphi_{1}\left(  x_{1}\right)
,\varphi_{2}\left(  x_{2}\right)  \right)  $ of $f$, we must have $\varphi
_{1}=\Phi_{1}^{-}=\Phi_{1}^{+}$, while we have $2$ possibilities for
$\varphi_{2}$ (as $\varphi_{2}\left(  \mathrm{E}\right)  $ can be chosen to be
$\mathrm{B}$ or $\mathrm{N}$, and $\varphi_{2}\left(  \mathrm{F}\right)  $
must be $\mathrm{V}$). Thus there are $2$ pairs of functions $\left(
\varphi_{1},\varphi_{2}\right)  $, namely $\left(  \Phi_{1}^{-},\Phi_{2}%
^{-}\right)  $ and $\left(  \Phi_{1}^{+},\Phi_{2}^{+}\right)  $ that allow us
to factorize $f$. Theorem~\ref{thm factorization--fi} also shows that in both
cases one can use the polynomial function%
\begin{align*}
p_{0}\left(  y_{1},y_{2}\right)   &  =\mathrm{B}\vee\left(  y_{1}%
\wedge\mathrm{N}\right)  \vee\left(  y_{2}\wedge\mathrm{B}\right)  \vee\left(
\mathrm{V}\wedge y_{1}\wedge y_{2}\right) \\
&  =\left(  y_{1}\wedge\mathrm{N}\right)  \vee\left(  y_{1}\wedge
y_{2}\right)  =y_{1}\wedge\left(  y_{2}\vee\mathrm{N}\right)  .
\end{align*}

Computing the coefficients $c_{I}^{-},c_{I}^{+}$ for $\left(  \Phi_{1}%
^{-},\Phi_{2}^{-}\right)  $, one can see that in this case $p^{-}=p_{0}=p^{+}%
$, i.e., $p=p_{0}$ is the only polynomial function such that $f\left(
x_{1},x_{2}\right)  =p\left(  \Phi_{1}^{-}\left(  x_{1}\right)  ,\Phi_{2}%
^{-}\left(  x_{2}\right)  \right)  $. On the other hand, choosing $\left(
\varphi_{1},\varphi_{2}\right)  =\left(  \Phi_{1}^{+},\Phi_{2}^{+}\right)  $,
we obtain $p^{-}=y_{1}\wedge y_{2}$ and $p^{+}=p_{0}$, and these are the only
possibilities, since there is no polynomial function strictly between $p^{-}$
and $p^{+}$. Thus $f$ has altogether three factorizations:%
\begin{align*}
f\left(  x_{1},x_{2}\right)   &  =\Phi_{1}^{-}\left(  x_{1}\right)
\wedge\left(  \Phi_{2}^{-}\left(  x_{2}\right)  \vee\mathrm{N}\right) \\
&  =\Phi_{1}^{+}\left(  x_{1}\right)  \wedge\left(  \Phi_{2}^{+}\left(
x_{2}\right)  \vee\mathrm{N}\right) \\
&  =\Phi_{1}^{+}\left(  x_{1}\right)  \wedge\Phi_{2}^{+}\left(  x_{2}\right)
.
\end{align*}

Note that these factorizations are essentially the same, since $\Phi_{2}%
^{-}\vee\mathrm{N}=\Phi_{2}^{+}\vee\mathrm{N}=\Phi_{2}^{+}$. The meaning of
$\Phi_{2}^{+}$ is pretty obvious, and $\Phi_{1}^{+}$ shows the customer's
opinion about the four airlines, either based on past experience or on
information received from other sources (except for airline $\mathrm{A}_{2}$,
where the value $\mathrm{D}$ indicates the lack of information). The fact that
$\Phi_{1}^{+}\left(  x_{1}\right)  $ and $\Phi_{2}^{+}\left(  x_{2}\right)  $
are aggregated in a conjunctive manner indicates a pessimistic attitude:\ the
customer expects an enjoyable flight only if both the airline and the travel
class are good enough.

\section{Pseudo-polynomial Functions over Chains\label{sect chains}}

In this section we consider the case when $Y$ is a finite chain. As we will
see, in this case the results of Section~\ref{sect factorization} lead to a
generalization of Algorithm~SUFF presented in \cite{CW3}. As before, we will
suppose that $Y$ is a sublattice of $\mathcal{P}\left(  U\right)  $ for some
finite set $U$, with least element $\emptyset$ and greatest element $U$. We
may assume without loss of generality that $U=\left[  m\right]  =\left\{
1,2,\ldots,m\right\}  $, and $Y=\left\{  \left[  0\right]  ,\left[  1\right]
,\ldots,\left[  m\right]  \right\}  $, where $\left[  0\right]  =\emptyset$.
The closure of a set $S\subseteq U$ is the smallest set of the form $\left[
k\right]  $ that contains $S$, while the interior of $S$ is the largest set of
the form $\left[  k\right]  $ that is contained in $S$ (see
Figure~\ref{fig chains}). Formally, we have%
\[
\operatorname{cl}\left(  S\right)  =\left[  \max S\right]  ,\quad
\operatorname{int}\left(  S\right)  =\left[  \min\overline{S}-1\right]  .
\]
%

\begin{figure}
[ptb]
\begin{center}
\includegraphics[
height=1.9156in,
width=1.785in
]%
{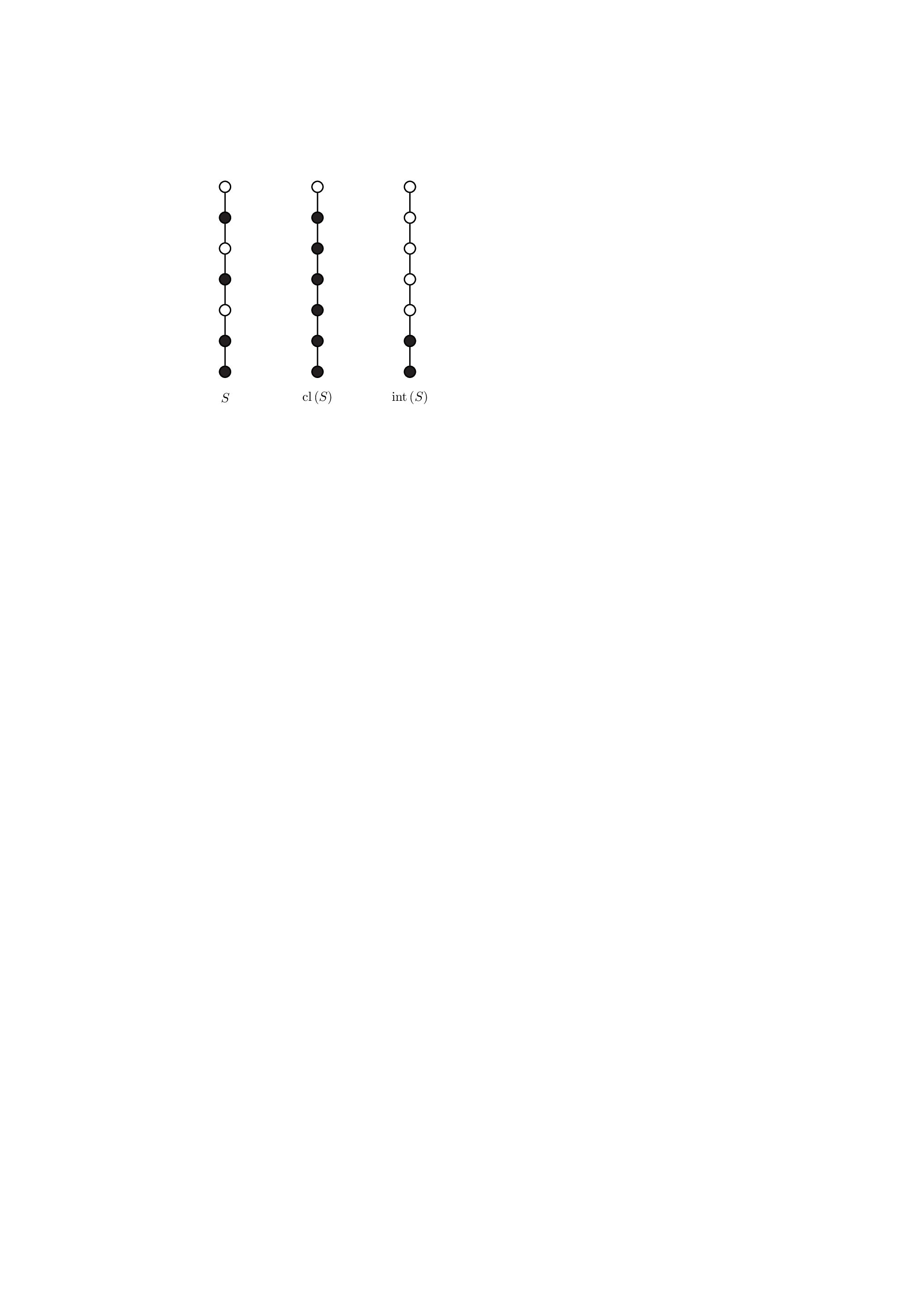}%
\caption{The closure and interior of a subset of a chain}%
\label{fig chains}%
\end{center}
\end{figure}

Let us assume that $f\colon\prod_{i\in\lbrack n]}X_{i}\rightarrow Y$ satisfies
(\ref{eq BC}). Then $f\left(  \mathbf{x}_{k}^{0}\right)  =\left[  u\right]
,f\left(  \mathbf{x}\right)  =\left[  v\right]  ,f\left(  \mathbf{x}_{k}%
^{1}\right)  =\left[  w\right]  $ with $u\leq v\leq w$, hence we have%
\begin{align*}
f\left(  \mathbf{x}\right)  \wedge\overline{f\left(  \mathbf{x}_{k}%
^{0}\right)  }  &  =\left\{  u+1,\ldots,v\right\}  ,\\
f\left(  \mathbf{x}\right)  \vee\overline{f\left(  \mathbf{x}_{k}^{1}\right)
}  &  =\left\{  1,\ldots,v,w+1,\ldots,m\right\}  .
\end{align*}
Therefore the terms in the definition of $\Phi_{k}^{-}$ and $\Phi_{k}^{+}$ can
be determined as follows:%
\begin{align}
\operatorname{cl}\bigl(f\left(  \mathbf{x}\right)  \wedge\overline{f\left(
\mathbf{x}_{k}^{0}\right)  }\bigr)  &  =\left\{
\begin{array}
[c]{ll}%
f\left(  \mathbf{x}\right)  , & \text{if }f\left(  \mathbf{x}_{k}^{0}\right)
<f\left(  \mathbf{x}\right)  ;\\
\emptyset, & \text{if }f\left(  \mathbf{x}_{k}^{0}\right)  =f\left(
\mathbf{x}\right)  ;
\end{array}
\right. \label{eq cl for chains}\\
\operatorname{int}\bigl(f\left(  \mathbf{x}\right)  \vee\overline{f\left(
\mathbf{x}_{k}^{1}\right)  }\bigr)  &  =\left\{
\begin{array}
[c]{ll}%
f\left(  \mathbf{x}\right)  , & \text{if }f\left(  \mathbf{x}_{k}^{1}\right)
>f\left(  \mathbf{x}\right)  ;\\
U, & \text{if }f\left(  \mathbf{x}_{k}^{1}\right)  =f\left(  \mathbf{x}%
\right)  .
\end{array}
\right.  \label{eq int for chains}%
\end{align}
Thus we obtain from Theorem~\ref{thm characterization} and
Remark~\ref{remark eq Fi_k-+ reformulated} the following characterization of
pseudo-polynomial functions valued in a chain.

\begin{theorem}
\label{thm characterization chains}If $Y$ is a finite chain, then a function
$f\colon\prod_{i\in\lbrack n]}X_{i}\rightarrow Y$ is a pseudo-polynomial
function if and only if it satisfies condition \textup{(\ref{eq BC})} and%
\[
f\left(  \mathbf{x}_{k}^{0}\right)  <f\left(  \mathbf{x}_{k}^{a_{k}}\right)
\text{ and }f\left(  \mathbf{y}_{k}^{a_{k}}\right)  <f\left(  \mathbf{y}%
_{k}^{1}\right)  \implies f\left(  \mathbf{x}_{k}^{a_{k}}\right)  \leq
f\left(  \mathbf{y}_{k}^{a_{k}}\right)
\]
holds for all $\mathbf{x},\mathbf{y}\in\prod_{i\in\lbrack n]}X_{i}$ and
$k\in\left[  n\right]  $, $a_{k}\in X_{k}$.
\end{theorem}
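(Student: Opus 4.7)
The plan is to deduce this chain version from the general characterization in Theorem~\ref{thm characterization} by exploiting the explicit formulas for closure and interior in a chain that were computed in~(\ref{eq cl for chains}) and~(\ref{eq int for chains}). Since the boundary condition~(\ref{eq BC}) appears verbatim in both statements, the whole task reduces to showing that, under~(\ref{eq BC}) and with $Y$ a chain, condition~(\ref{eq Fi_k-+}) is equivalent to the stated implication.

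First I would rewrite~(\ref{eq Fi_k-+}) using Remark~\ref{remark eq Fi_k-+ reformulated}, which turns the joinand-by-meetand inequality $\Phi_k^{-}\leq \Phi_k^{+}$ into the pointwise statement that
\[
\operatorname{cl}\bigl(f(\mathbf{y})\wedge\overline{f(\mathbf{y}_k^0)}\bigr)\leq\operatorname{int}\bigl(f(\mathbf{x})\vee\overline{f(\mathbf{x}_k^1)}\bigr)
\]
must hold for every $k\in[n]$ and every pair $\mathbf{x},\mathbf{y}$ with $x_k=y_k$. This is the form already prepared for chain evaluation.

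Next I would plug in the chain formulas~(\ref{eq cl for chains}) and~(\ref{eq int for chains}): the left-hand side equals $f(\mathbf{y})$ when $f(\mathbf{y}_k^0)<f(\mathbf{y})$ and $\emptyset$ otherwise, while the right-hand side equals $f(\mathbf{x})$ when $f(\mathbf{x}_k^1)>f(\mathbf{x})$ and $U$ otherwise. A four-way case analysis then collapses: whenever the left-hand side is $\emptyset$ or the right-hand side is $U$, the inequality is automatic, so the only nontrivial case is
\[
f(\mathbf{y}_k^0)<f(\mathbf{y})\quad\text{and}\quad f(\mathbf{x}_k^1)>f(\mathbf{x}),
\]
in which case the inequality becomes $f(\mathbf{y})\leq f(\mathbf{x})$.

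Finally, I would relabel to match the statement: writing the common $k$-th coordinate as $a_k$ so that $\mathbf{x}=\mathbf{x}_k^{a_k}$ and $\mathbf{y}=\mathbf{y}_k^{a_k}$, and swapping the roles of $\mathbf{x}$ and $\mathbf{y}$, the nontrivial case reads exactly as $f(\mathbf{x}_k^0)<f(\mathbf{x}_k^{a_k})$ and $f(\mathbf{y}_k^{a_k})<f(\mathbf{y}_k^1)$ implying $f(\mathbf{x}_k^{a_k})\leq f(\mathbf{y}_k^{a_k})$. Combining this equivalence with Theorem~\ref{thm characterization} gives the claim. The argument is essentially bookkeeping; the only conceptual point is the observation that in a chain the operators $\operatorname{cl}$ and $\operatorname{int}$ take only two possible values on the relevant inputs, which is why the general lattice inequality degenerates to a plain order-theoretic implication.
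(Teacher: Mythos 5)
Your proposal is correct and follows essentially the same route as the paper: the paper likewise obtains this theorem by combining Theorem~\ref{thm characterization} with Remark~\ref{remark eq Fi_k-+ reformulated} and the chain formulas (\ref{eq cl for chains}) and (\ref{eq int for chains}), the only nontrivial case of the resulting inequality being exactly the stated implication after relabeling $\mathbf{x}$ and $\mathbf{y}$.
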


Let us now define the following three sets for any $k\in\left[  n\right]
,a_{k}\in X_{k}$, as in \cite{CW3}:%
\begin{align*}
\mathcal{W}_{a_{k}}^{f}  &  =\left\{  f\left(  \mathbf{x}\right)  \colon
x_{k}=a_{k}\text{ and }f\left(  \mathbf{x}_{k}^{0}\right)  <f\left(
\mathbf{x}\right)  <f\left(  \mathbf{x}_{k}^{1}\right)  \right\}  ,\\
\mathcal{L}_{a_{k}}^{f}  &  =\left\{  f\left(  \mathbf{x}\right)  \colon
x_{k}=a_{k}\text{ and }f\left(  \mathbf{x}_{k}^{0}\right)  <f\left(
\mathbf{x}\right)  =f\left(  \mathbf{x}_{k}^{1}\right)  \right\}  ,\\
\mathcal{U}_{a_{k}}^{f}  &  =\left\{  f\left(  \mathbf{x}\right)  \colon
x_{k}=a_{k}\text{ and }f\left(  \mathbf{x}_{k}^{0}\right)  =f\left(
\mathbf{x}\right)  <f\left(  \mathbf{x}_{k}^{1}\right)  \right\}  .
\end{align*}
From (\ref{eq cl for chains}) and (\ref{eq int for chains}) it follows that
$\Phi_{k}^{-}\left(  a_{k}\right)  =\bigvee\mathcal{L}_{a_{k}}^{f}\vee
\bigvee\mathcal{W}_{a_{k}}^{f}$ and $\Phi_{k}^{+}\left(  a_{k}\right)
=\bigwedge\mathcal{U}_{a_{k}}^{f}\wedge\bigwedge\mathcal{W}_{a_{k}}^{f}$,
hence the condition $\Phi_{k}^{-}\leq\varphi_{k}\leq\Phi_{k}^{+}$ in
Theorem~\ref{thm factorization--fi} can be reformulated as
follows:\renewcommand{\labelenumi}{\textup{(\alph{enumi})}} \renewcommand{\theenumi}{\labelenumi}

\begin{enumerate}
\item \label{item W}either $\mathcal{W}_{a_{k}}^{f}=\left\{  \varphi
_{k}\left(  a_{k}\right)  \right\}  $ or $\mathcal{W}_{a_{k}}^{f}=\emptyset$;

\item \label{item L}$\varphi_{k}\left(  a_{k}\right)  \geq\bigvee
\mathcal{L}_{a_{k}}^{f}$;

\item \label{item U}$\varphi_{k}\left(  a_{k}\right)  \leq\bigwedge
\mathcal{U}_{a_{k}}^{f}$.
\end{enumerate}

Thus by Theorem~\ref{thm factorization--fi}, $f$ is a pseudo-polynomial
function if and only if there are functions $\varphi_{k}$ satisfying the above
three conditions. If each $X_{k}$ is a bounded chain and $f$ is an
order-preserving function depending on all of its variables, then
\ref{item W},\ref{item L},\ref{item U} are equivalent to equation (4.5) in
\cite{CW3}, and Algorithm~SUFF does not return the value \textbf{false} if and
only if (\ref{eq Fi_k-+}) holds. Thus, in the finite case, Theorem~4.1 of
\cite{CW3} follows as a special case of Theorem~\ref{thm factorization--fi}.
Moreover, the results of Section~\ref{sect factorization} not only generalize
Algorithm~SUFF to arbitrary finite distributive lattices (instead of finite
chains) and to pseudo-polynomial functions (instead of Sugeno utility
functions), but they provide all possible factorizations of a given
pseudo-polynomial function $f$ (whereas Algorithm~SUFF\ constructs only one factorization).

\begin{remark}
\label{remark BC1 for chains}If the lattice $Y$ is a finite chain, as it is
the case in many applications, then any map $\varphi_{k}\colon X_{k}%
\rightarrow Y$ attains its minimum and its maximum at some points in $X_{k}$,
hence there exist elements $0_{X_{k}}$ and $1_{X_{k}}$ such that
(\ref{eq BC1}) holds. Thus, the boundary condition does not impose any
restriction on $\varphi_{k}$; the point is that we must know the elements
$0_{X_{k}}$ and $1_{X_{k}}$, even if we do not know the function $\varphi_{k}%
$. However, even this mild assumption can be released, by suitably modifying
the definition of the functions $\Phi_{k}^{-}$ and $\Phi_{k}^{+}$, so that
they do not refer to $0$ and $1$ anymore:%
\[
\Phi_{k}^{-}\left(  a_{k}\right)  :=\bigvee_{\substack{x_{k}=a_{k}%
\\\heartsuit\in X_{k}}}\operatorname{cl}\bigl(f\left(  \mathbf{x}\right)
\wedge\overline{f(\mathbf{x}_{k}^{\heartsuit})}\bigr)\text{,\quad}\Phi_{k}%
^{+}\left(  a_{k}\right)  :=\bigwedge_{\substack{x_{k}=a_{k}\\\heartsuit\in
X_{k}}}\operatorname{int}\bigl(f\left(  \mathbf{x}\right)  \vee\overline
{f(\mathbf{x}_{k}^{\heartsuit})}\bigr).
\]
To see that these new definitions yield the same functions as
(\ref{eq definition of Fi_k- and Fi_k+}), we only need to observe that we have
just added some new joinands to the join defining $\Phi_{k}^{-}$, but each of
these new joinands is dominated by some of the original joinands of
(\ref{eq definition of Fi_k- and Fi_k+}). Indeed, from (\ref{eq BC}) it
follows that $\overline{f(\mathbf{x}_{k}^{\heartsuit})}\leq\overline
{f(\mathbf{x}_{k}^{0})}$ for all $\heartsuit\in X_{k}$, hence%
\[
\operatorname{cl}\bigl(f\left(  \mathbf{x}\right)  \wedge\overline
{f(\mathbf{x}_{k}^{\heartsuit})}\bigr)\leq\operatorname{cl}\bigl(f\left(
\mathbf{x}\right)  \wedge\overline{f(\mathbf{x}_{k}^{0})}\bigr)\text{.}%
\]
Similarly, each of the new meetands that we have added to the meet defining
$\Phi_{k}^{+}$ is absorbed by some of the original meetands.

With these new definitions, we can apply Theorem~\ref{thm characterization} to
decide whether a given function $f$ is a pseudo-polynomial function, and we
can find all maps $\varphi_{k}$ that can appear in a factorization of $f$ with
the help of Theorem~\ref{thm factorization--fi}. Once we have the maps
$\varphi_{k}$, we can define $0_{X_{k}}$ and $1_{X_{k}}$ as%
\[
0_{X_{k}}:=\operatorname*{argmin}\limits_{x_{k}\in X_{k}}\varphi_{k}\left(
x_{k}\right)  \text{ and }1_{X_{k}}:=\operatorname*{argmax}\limits_{x_{k}\in
X_{k}}\varphi_{k}\left(  x_{k}\right)  ,
\]
and then we can use Theorem~\ref{thm factrization--fi and p} to find the
possible polynomial functions $p$. (Here $\operatorname*{argmin}$ and
$\operatorname*{argmax}$ denote the elements of $X_{k}$ where $\varphi_{k}$
attains its minimum and its maximum, respectively.) The price that we have to
pay for this generality is that the computation of $\Phi_{k}^{-}$ and
$\Phi_{k}^{+}$ is longer. Alternatively, we can apply \textquotedblleft
reverse engineering\textquotedblright\ to (\ref{eq BC}), as in
Section~\ref{sect example}, in order to find $0_{X_{k}}$ and $1_{X_{k}}$.
\end{remark}

\section{Concluding remarks\label{sect conclusion}}

We have extended the study of Sugeno utility functions over chains developed
in \cite{CW1,CW2,CW3} to the case of finite distributive lattices. We refined
the axiomatization given in \cite{CW1,CW3} by providing necessary and
sufficient conditions for a function defined on a Cartesian product of
arbitrary underlying sets and valued in a finite distributive lattice, to be
factorizable as a pseudo-polynomial function (\ref{eq:generalPol}). Moreover,
in doing so, we were able to furnish all possible factorizations, if such a
factorization exists, and we proposed a new procedure for constructing them,
which subsumes that of \cite{CW2,CW3} in the case when the codomain is a
finite chain.

Looking at directions for further research, we are inevitably drawn to the two
following topics. As mentioned, pseudo-polynomial functions play an important
role in multicriteria decision making since they subsume the so-called Sugeno
utility functions, which in turn are used to model preference relations: Say
that a preference relation $\preceq$ on $X_{1}\times\cdots\times X_{n}$ is
Sugeno representable if there is a Sugeno utility function $U\colon
X_{1}\times\cdots\times X_{n}\rightarrow Y$ such that $\mathbf{x}%
\preceq\mathbf{y}$ if and only if $U\left(  \mathbf{x}\right)  \leq U\left(
\mathbf{y}\right)  $. Given the results of the current paper, and following
the line of research developed in \cite{BouDubPraPir09,DPS,RGLC}, it is
natural to consider the following problem.

\begin{problem}
Axiomatize those preference relations that are Sugeno representable.
\end{problem}

This problem was solved in the realm of decision making under uncertainty in
\cite{DPS}.

The second emerging topic is of somewhat different nature. So far, we have
played within the setting where no information is missing. However, in
real-life situations this is rarely the case. Translating it into mathematical
terms, it is often the case that information about the functions we deal with
is incomplete. In other words, we are given partial functions in the sense
that they are not everywhere defined. Taking the simplest case where the only
aggregation functions considered are Sugeno integrals, we are faced with the
following interpolation problem.

\begin{problem}
Given a partial function $f\colon D\rightarrow X$, where $X$ is a distributive
lattice and $D\subseteq X^{n}$, give necessary and sufficient conditions for
the existence of a Sugeno integral $p\colon X^{n}\rightarrow X$ which
interpolates $f$ on all of its domain $D$, i.e., $p|_{D}=f$. If such a Sugeno
integral exists, provide a procedure to compute it.
\end{problem}

Again, this problem has been solved for the case of finite chains $X$ in
\cite{RGLC}.

These two problems constitute topics of current research being carried out by
the authors.

\subsubsection*{Acknowledgments.}

The first named author is supported by the internal research project
F1R-MTH-PUL-09MRDO of the University of Luxembourg. The second named author
acknowledges that the present project is supported by the
\hbox{T\'{A}MOP-4.2.1/B-09/1/KONV-2010-0005} program of National Development
Agency of Hungary, by the Hungarian National Foundation for Scientific
Research under grants no.\ K77409 and K83219, by the National Research Fund of
Luxembourg, and cofunded under the Marie Curie Actions of the European
Commission \hbox{(FP7-COFUND).}

\end{document}